\begin{document}

\newtheorem{theorem}{Theorem}    
\newtheorem{proposition}[theorem]{Proposition}
\newtheorem{conjecture}[theorem]{Conjecture}
\def\theconjecture{\unskip}
\newtheorem{corollary}[theorem]{Corollary}
\newtheorem{lemma}[theorem]{Lemma}
\newtheorem{sublemma}[theorem]{Sublemma}
\newtheorem{fact}[theorem]{Fact}
\newtheorem{observation}[theorem]{Observation}
\theoremstyle{definition}
\newtheorem{definition}{Definition}
\newtheorem{notation}[definition]{Notation}
\newtheorem{remark}[definition]{Remark}
\newtheorem{question}[definition]{Question}
\newtheorem{questions}[definition]{Questions}

\newtheorem{example}[definition]{Example}
\newtheorem{problem}[definition]{Problem}
\newtheorem{exercise}[definition]{Exercise}

\numberwithin{theorem}{section}
\numberwithin{definition}{section}
\numberwithin{equation}{section}

\def\reals{{\mathbb R}}
\def\torus{{\mathbb T}}
\def\heis{{\mathbb H}}
\def\integers{{\mathbb Z}}
\def\rationals{{\mathbb Q}}
\def\naturals{{\mathbb N}}
\def\complex{{\mathbb C}\/}
\def\distance{\operatorname{distance}\,}
\def\sym{\operatorname{Symm}\,}
\def\support{\operatorname{support}\,}
\def\dist{\operatorname{dist}\,}
\def\Span{\operatorname{span}\,}
\def\degree{\operatorname{degree}\,}
\def\kernel{\operatorname{kernel}\,}
\def\dim{\operatorname{dim}\,}
\def\codim{\operatorname{codim}}
\def\trace{\operatorname{trace\,}}
\def\Span{\operatorname{span}\,}
\def\dimension{\operatorname{dimension}\,}
\def\codimension{\operatorname{codimension}\,}
\def\Gl{\operatorname{Gl}\,}
\def\nullspace{\scriptk}
\def\kernel{\operatorname{Ker}}
\def\ZZ{ {\mathbb Z} }
\def\p{\partial}
\def\rp{{ ^{-1} }}
\def\Re{\operatorname{Re\,} }
\def\Im{\operatorname{Im\,} }
\def\ov{\overline}
\def\eps{\varepsilon}
\def\lt{L^2}
\def\diver{\operatorname{div}}
\def\curl{\operatorname{curl}}
\def\etta{\eta}
\newcommand{\norm}[1]{ \|  #1 \|}
\def\expect{\mathbb E}
\def\bull{$\bullet$\ }
 \def\newbull{\newline$\bullet$\ }
 \def\nobull{\noindent$\bullet$\ }
\def\det{\operatorname{det}}
\def\Det{\operatorname{Det}}
\def\multiR{\mathbf R}
\def\bestA{\mathbf A}
\def\bestB{\mathbf B}
\def\Apq{\mathbf A_{p,q}}
\def\Apqr{\mathbf A_{p,q,r}}
\def\rank{\operatorname{rank}}
\def\rankk{\mathbf r}
\def\diameter{\operatorname{diameter}}
\def\bp{\mathbf p}
\def\bff{\mathbf f}
\def\bg{\mathbf g}
\def\essd{\operatorname{essential\ diameter}}

\def\mab{M}
\def\t2{\tfrac12}

\newcommand{\abr}[1]{ \langle  #1 \rangle}
\def\unitQ{{\mathbf Q}}
\def\mbfp{{\mathbf P}}

\def\aff{\operatorname{Aff}}
\def\barc{\bar c}

\newcommand{\Norm}[1]{ \Big\|  #1 \Big\| }
\newcommand{\set}[1]{ \left\{ #1 \right\} }
\def\one{{\mathbf 1}}
\newcommand{\modulo}[2]{[#1]_{#2}}

\def\rint{ \int_{\reals^+} }
\def\Abest{{\mathbb A}}

\def\repair{\medskip\hrule\hrule\medskip}

\def\scriptf{{\mathcal F}}
\def\scripts{{\mathcal S}}
\def\scriptq{{\mathcal Q}}
\def\scriptg{{\mathcal G}}
\def\scriptm{{\mathcal M}}
\def\scriptb{{\mathcal B}}
\def\scriptc{{\mathcal C}}
\def\scriptt{{\mathcal T}}
\def\scripti{{\mathcal I}}
\def\scripte{{\mathcal E}}
\def\scriptv{{\mathcal V}}
\def\scriptw{{\mathcal W}}
\def\scriptu{{\mathcal U}}
\def\scripta{{\mathcal A}}
\def\scriptr{{\mathcal R}}
\def\scripto{{\mathcal O}}
\def\scripth{{\mathcal H}}
\def\scriptd{{\mathcal D}}
\def\scriptl{{\mathcal L}}
\def\scriptn{{\mathcal N}}
\def\scriptp{{\mathcal P}}
\def\scriptk{{\mathcal K}}
\def\scriptP{{\mathcal P}}
\def\scriptj{{\mathcal J}}
\def\scriptz{{\mathcal Z}}
\def\frakv{{\mathfrak V}}
\def\frakG{{\mathfrak G}}
\def\frakA{{\mathfrak A}}
\def\frakB{{\mathfrak B}}
\def\frakC{{\mathfrak C}}
\def\frakf{{\mathfrak F}}
\def\fcross{{\mathfrak F^{\times}}}
\def\proj{{\mathbb P}}
\def\gcross{{\mathfrak G^{\times}}}

\def\ccount{{\mathbf r}}

\author{Michael Christ}
\address{
        Michael Christ\\
        Department of Mathematics\\
        University of California \\
        Berkeley, CA 94720-3840, USA}
\email{mchrist@berkeley.edu}
\thanks{Research supported by NSF grant DMS-1363324.}

\date{
April 19, 2015.}
\title{On nearly radial product functions} 
\begin{abstract} 
If $\norm{f}_{\lt(\reals^d)}=1$ and if the function $f(x)f(y)$ is 
close in $\lt$ norm to a radially symmetric function of $(x,y)$
then $f$ is close in $\lt$ norm to a centered Gaussian function.
A quantitative form of this assertion is established.
\end{abstract}
\maketitle

\section{Statement of principal result}
It is well known that if $f:\reals^d\to\complex$ 
then the function \[(f\otimes f)(x,y)=f(x)f(y)\] 
with domain $\reals^d\times\reals^d$ 
is radially symmetric if and only if $f$ is a radial complex Gaussian function,
by which mean a function $G:
\reals^d\to\complex$ of the form 
\[G(x)=ce^{-\gamma |x|^2} \ \text{where $c,\gamma\in\complex$.}\]
In this note we establish a quantitative version of this uniqueness statement. 

Denote by $\frakG\subset L^2(\reals^d)$
the set of all square integrable complex radial Gaussian functions. 
By a radially symmetric function Lebesgue measurable function we mean one of the form $f(x)=h(|x|)$
almost everywhere.
Denote by $\proj: L^2(\reals^{d}\times\reals^d)\to L^2(\reals^{d}\times\reals^d)$
the orthogonal projection onto the subspace of all radially symmetric $L^2$ functions.
For $f,g\in L^2(\reals^d)$, denote by $f\otimes g\in L^2(\reals^d\times\reals^d)$
the function \begin{equation} (f\otimes g)(x,y) = f(x)g(y).\end{equation}
Then for nonzero functions $f,g$, $\norm{\proj(f\otimes g)}_2\le \norm{f}_2\norm{g}_2$
for all $f,g\in L^2(\reals^d)$, with equality if and only if $f$ is a complex radial Gaussian
and $g$ is a scalar  multiple of $f$, up to redefinition on sets of Lebesgue measure zero.

$\lt\times\lt$ denotes the Hilbert space of all ordered pairs of functions $(f,g)$
with both $f,g\in\lt(\reals^d)$, with norm squared
\begin{equation} \norm{(f,g)}_2^2 = \norm{f}_2^2+\norm{g}_2^2.  \end{equation}
Define
$\gcross\subset\frakG\times\frakG \subset L^2(\reals^d)\times L^2(\reals^d)$ to be
\begin{equation}
\gcross=\{(F,cF): F\in\frakG\ \text{ and } 0\ne c\in\complex\}.
\end{equation}
We regard $\lt\times\lt$ as a Hilbert space with norm defined by
$\norm{(f,g)}^2 = \norm{f}^2+\norm{g}^2$, of which $\gcross$ is a closed subspace.
The distance squared in $L^2(\reals^d)\times L^2(\reals^d)$ from $(f,g)$ to $\gcross$ is 
defined by
\begin{equation}
\dist((f,g),\gcross)^2 = \inf_{(F,cF)\in\gcross} \big(\norm{f-F}_2^2+\norm{g-cF}_2^2\big).
\end{equation}

\begin{theorem} \label{thm:main}
For each $d\ge 1$ there exists $c_d>0$ such that for all $(f,g)\in L^2(\reals^d)\times L^2(\reals^d)$ 
satisfying $\norm{f}_2=\norm{g}_2=1$,
\begin{equation}\label{mainconclusion1}
\norm{\proj(f\otimes g)}_2 \le 
1 -c_d \dist((f,g),\gcross)^2.
\end{equation}

There exists  $C_d<\infty$ such that whenever $0\ne (f,g)\in L^2(\reals^d)\times L^2(\reals^d)$
satisfy $\norm{f}_2=\norm{g}_2=1$,
\begin{equation}\label{mainconclusion2}
\norm{\proj(f\otimes g)}_2 \le 1 -\tfrac{d}{2(d+1)} \dist((f,g),\gcross)^2 + C_d \dist((f,g),\gcross)^3.
\end{equation}
\end{theorem}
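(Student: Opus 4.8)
The plan is to localize near the extremal set $\gcross$, carry out a second order expansion of $\norm{\proj(f\otimes g)}_2^2$ there, diagonalize the quadratic form that appears, and handle functions far from $\gcross$ by a soft compactness argument. Since $\dist((f,g),\gcross)^2\le\norm{f}_2^2+\norm{g}_2^2=2$ (let the comparison Gaussian shrink to $0$), once $C_d$ is large enough the right-hand side of \eqref{mainconclusion2} exceeds $1\ge\norm{\proj(f\otimes g)}_2$ whenever $\dist((f,g),\gcross)\ge\delta_0$, for any fixed $\delta_0=\delta_0(d)>0$; so it is enough to prove \eqref{mainconclusion2} for $\dist((f,g),\gcross)\le\delta_0$, and there it is cleanest to phrase the target for squared norms, as $\norm{\proj(f\otimes g)}_2^2\le 1-\tfrac{d}{2(d+1)}\dist((f,g),\gcross)^2+O(\dist((f,g),\gcross)^3)$. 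Granting this, \eqref{mainconclusion1} follows by combining it (for $\dist((f,g),\gcross)$ small) with a compactness argument (for $\dist((f,g),\gcross)\ge\delta_1>0$): translating or modulating $f$ destroys radial symmetry of $f\otimes g$, so the only loss of compactness is a Gaussian width degenerating, which a dilation removes, and a weak-limit argument using the equality case of $\norm{\proj(f\otimes g)}_2\le\norm{f}_2\norm{g}_2$ then shows $\sup\{\norm{\proj(f\otimes g)}_2 : \norm{f}_2=\norm{g}_2=1,\ \dist((f,g),\gcross)\ge\delta_1\}<1$.

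\emph{Normalization and expansion.} Put $F_0(x)=\pi^{-d/4}e^{-|x|^2/2}$. Dilations $f\mapsto t^{d/2}f(t\cdot)$, quadratic modulations $f\mapsto e^{i\tau|x|^2}f$, and scalar phases act by $L^2$-isometries that preserve $\norm{\proj(f\otimes g)}_2$ (they intertwine with maps of $\reals^{2d}$ commuting with $\proj$, or with multiplication by a radial unimodular function) and preserve $\dist(\cdot,\gcross)$, and they act transitively on unit Gaussians; so after applying one we may assume the point of $\gcross$ closest to $(f,g)$ is $(\rho F_0,\rho' F_0)$ with $\rho,\rho'>0$. Write $f=\rho F_0+\phi$, $g=\rho' F_0+\psi$ with $\phi,\psi\perp F_0$; then $\rho^2+\norm{\phi}_2^2=\rho'^2+\norm{\psi}_2^2=1$, and with $\delta:=\dist((f,g),\gcross)$ one has $\delta^2=\norm{\phi}_2^2+\norm{\psi}_2^2$ for $\delta$ small (a point a short normal distance from a smooth manifold realizes that distance). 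Stationarity of the closest point imposes, besides $\phi,\psi\perp F_0$, exactly one more relation, $\abr{\phi,\eta}+\abr{\psi,\eta}=O(\delta^2)$, where $\eta$ spans the tangent direction of $\gcross$ not already accounted for by scalings and modulations of $F_0$ — explicitly the first radial excited Hermite function, $\eta(x)\propto(|x|^2-\tfrac d2)F_0(x)$. Expanding $f\otimes g=\rho\rho'\,F_0\otimes F_0+\rho\,F_0\otimes\psi+\rho'\,\phi\otimes F_0+\phi\otimes\psi$, and noting that $F_0\otimes F_0$ is radial and orthogonal to each of $\proj(F_0\otimes\psi),\proj(\phi\otimes F_0),\proj(\phi\otimes\psi)$ (pairing against $F_0\otimes F_0$ produces a vanishing factor $\abr{F_0,\phi}$ or $\abr{F_0,\psi}$), we get
\[
\norm{\proj(f\otimes g)}_2^2=\rho^2\rho'^2+\norm{\rho\,\proj(F_0\otimes\psi)+\rho'\,\proj(\phi\otimes F_0)+\proj(\phi\otimes\psi)}_2^2 .
\]
Using $\rho^2\rho'^2=1-\delta^2+O(\delta^4)$, $\rho,\rho'=1+O(\delta^2)$, $\norm{\proj(\phi\otimes\psi)}_2\le\norm{\phi}_2\norm{\psi}_2=O(\delta^2)$, this reads $\norm{\proj(f\otimes g)}_2^2=1-\delta^2+Q(\phi,\psi)+O(\delta^3)$ with $Q(\phi,\psi):=\norm{\proj(F_0\otimes\psi)+\proj(\phi\otimes F_0)}_2^2$.

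\emph{Diagonalizing $Q$.} Decompose $\phi,\psi$ into spherical harmonics in the angular variable of $\reals^d$. Because $F_0$ is radial, $\proj(\phi\otimes F_0)(z)$ is the spherical average of $\phi(|z|\omega')F_0(|z|\omega'')$ over $\omega=(\omega',\omega'')\in S^{2d-1}$, and integrating out the angular variable of $\omega'$ annihilates every spherical harmonic of positive degree; hence only the radial parts of $\phi,\psi$ enter $Q$, while their higher-degree parts only inflate $\delta^2$. So take $\phi,\psi$ radial and expand in the orthonormal radial Hermite basis $\{\psi_n\}_{n\ge0}$ of $\lt(\reals^d)$ (with $\psi_0=F_0$, $\psi_1\propto\eta$, each $\psi_n$ a harmonic-oscillator eigenfunction of energy $d+4n$) and correspondingly in the radial Hermite basis $\{E_n\}_{n\ge0}$ of $\lt(\reals^{2d})$ (energy $2d+4n$). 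Since $\psi_n\otimes F_0$ and $F_0\otimes\psi_n$ are $\reals^{2d}$-oscillator eigenfunctions of energy $2d+4n$ their radial projections lie in $\complex E_n$, and by the coordinate-flip symmetry $\proj(\psi_n\otimes F_0)=\proj(F_0\otimes\psi_n)=\mu_n E_n$ with $\mu_n=\abr{F_0\otimes\psi_n,E_n}$. Writing $\phi=\sum_{n\ge1}b_n\psi_n$, $\psi=\sum_{n\ge1}c_n\psi_n$ and using orthonormality of the $E_n$,
\[
Q(\phi,\psi)=\sum_{n\ge1}|b_n+c_n|^2\mu_n^2,\qquad \delta^2=\sum_{n\ge1}\big(|b_n|^2+|c_n|^2\big).
\]
The constraint $b_1+c_1=O(\delta^2)$ removes the $n=1$ term to leading order, so $Q(\phi,\psi)\le 2\big(\max_{n\ge2}\mu_n^2\big)\delta^2+O(\delta^3)$. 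A Mehler-type computation — writing $\sum_n(\text{normalizing constants})\,\abr{F_0\otimes\psi_n,E_n}\,s^n$ as a Gaussian integral that collapses to a power of $1-s$ — gives $\mu_n^2=(d/2)_n/(d)_n$ (Pochhammer symbols); in particular $\mu_n^2$ is strictly decreasing in $n$, so $\max_{n\ge2}\mu_n^2=\mu_2^2=\tfrac{d+2}{4(d+1)}$ and $2\mu_2^2=1-\tfrac{d}{2(d+1)}$. Hence $Q(\phi,\psi)\le(1-\tfrac{d}{2(d+1)})\delta^2+O(\delta^3)$, which with the expansion above yields the localized estimate and therefore \eqref{mainconclusion2}; equality to leading order at $\phi=\psi\propto\psi_2$ shows the coefficient is optimal.

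\emph{Main obstacle.} The crux is the last paragraph: identifying the single constraint $b_1+c_1\approx 0$ (it comes precisely from the width parameter of the Gaussian and is indispensable — without it $\mu_1^2=\tfrac12$ would give $2\mu_1^2=1$ and destroy the gain), and evaluating the singular values $\mu_n$ in closed form together with their monotonicity in $n$. A secondary, technical point — needed both for the $O(\dist^3)$ error term and for the identity $\delta^2=\norm{\phi}_2^2+\norm{\psi}_2^2$ — is to treat $\gcross$, near a nondegenerate Gaussian pair, as a smooth finite-dimensional manifold of positive reach so that normal displacements of small size realize the distance.
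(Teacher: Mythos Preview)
Your perturbation analysis is correct and matches the paper's closely. The paper defines an operator $T$ on $L^2(\reals^d)$ by $\langle Tf,g\rangle = \langle\proj(f\otimes F_0),\proj(g\otimes F_0)\rangle$, shows it annihilates the non-radial subspace, and computes its eigenvalues on the radial subspace to be exactly your $\mu_n^2 = (d/2)_n/(d)_n$ (via the sphere integral $\int_{S^{2d-1}}|x|^{2n}\,d\sigma$ rather than a Mehler identity, but the outcome is the same). The orthogonality constraint $b_1+c_1=0$ arising from the shared width parameter, and the identification of $\mu_2^2=\tfrac{d+2}{4(d+1)}$ as the governing eigenvalue, are exactly what the paper finds. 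Your Hermite-basis organization and the observation that $\proj(\phi\otimes F_0)$ sees only the radial part of $\phi$ are a tidy repackaging of the paper's Section~5.

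The genuine gap is the compactness step for \eqref{mainconclusion1}. Your sentence ``the only loss of compactness is a Gaussian width degenerating, which a dilation removes, and a weak-limit argument\dots'' is the desired \emph{conclusion}, not an argument. A bare weak-limit argument fails: $(f,g)\mapsto\norm{\proj(f\otimes g)}_2$ is not weakly upper semicontinuous, so you need strong $L^2$ precompactness of near-extremizing sequences modulo dilation, and that is not automatic from the absence of translation/modulation symmetries. The paper spends Sections~3--4 on this. It first proves a Lorentz refinement $\norm{\proj(f\otimes g)}_2\lesssim\norm{f}_{L^{2,4}}\norm{g}_{L^{2,4}}$ via a trilinear estimate on indicator sets that gains a power whenever $|A|,|B|,|\scripta|^{1/2}$ are unbalanced; this forces near-extremizers to have their $L^2$ mass at a single value-scale, and then at a single spatial scale after one dilation. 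The Fourier symmetry $\norm{\proj(\widehat f\otimes\widehat g)}_2=\norm{\proj(f\otimes g)}_2$ gives frequency localization at a second scale; a separate argument (using nonnegativity of $|f|$ and a pairing with a fixed Gaussian) pins the product of the two scales, yielding precompactness via Rellich. Finally, passing from $(|f|,|g|)$ near a positive Gaussian to $(f,g)$ near a complex Gaussian requires analyzing the phase through an approximate additive functional equation $\tilde\varphi(s)+\tilde\psi(t)\approx\xi(s+t)$. None of this is routine, and your sketch does not supply any of it.
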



Other recent papers in which quantitative stability theorems in this spirit are proved, 
for other inequalities, include
\cite{bianchiegnell}, \cite{chenfrankwerth}, \cite{christyoungest}, \cite{christRS3}, \cite{christHY}.

The author is indebted to Jonathan Bennett for posing the question, and for valuable conversations
and correspondence.

\section{Some notation}
The notation $\norm{f}$ with no subscript indicates the $\lt$ norm, over either $\reals^d$
or $\reals^d\times\reals^d$, and for functions taking values
either in $\complex$ or in $\complex\times\complex$, with respect to Lebesgue measure.

For $r\in\reals^+$, denote by $\sigma_r$ the unique probability measure
on $S_r=\set{z\in\reals^d\times\reals^d: |z|=r}$ that is invariant under rotations
of $\reals^{2d}=\reals^d\times\reals^d$.
For $0\ne z\in\reals^{d}\times\reals^d$,
\begin{equation}
\proj(f\otimes g)(z) = \iint f(x)g(y)\,d\sigma_{|z|}(x,y).
\end{equation}

Let $\omega_d\in\reals^+$ denote the measure of the unit sphere in $\reals^{2d}$.
For each dimension $d\ge 1$, 
for any Lebesgue measurable subsets $A,B\subset\reals^d$ with finite measures,
\begin{align}
&|A|\cdot|B| = |A\times B| = \omega_d\int_0^\infty \sigma_r(A\times B) \, r^{2d-1}\,dr
\intertext{and}
\\& \norm{\proj(\one_A\otimes \one_B)}^2 = \omega_d\int_0^\infty \sigma_r(A\times B)^2\,r^{2d-1}\,dr.
\end{align}
For any $E\subset\reals^+$, let $\scripta_E=\{z\in\reals^d: |z|\in E\}$. Then
\begin{equation}
\langle \proj(\one_A\otimes \one_B),\,\one_{\scripta_E}\rangle = \omega_d\int_E \sigma_r(A\times B)\,r^{2d-1}\,dr.
\end{equation}
For $E\subset\reals^+$ define
\begin{equation}\label{mudefn} \mu(E) = |\scripta_E| = \omega_d\int_E r^{2d-1}\,dr. \end{equation}

\section{Preliminary lemmas}

The orthogonal projection $\proj$ is a bounded linear operator,  indeed a contraction,
from $L^2(\reals^d)\times L^2(\reals^d)$ to $L^2(\reals^d\times\reals^d)$
A stronger form of boundedness will be proved in this section.
For $a=(a_1,a_2,a_3)\in(0,\infty)^3$ define
\[\Lambda(a_1,a_2,a_3) = \min_{i\ne j} \frac{a_i}{a_j}.\]

\begin{lemma} \label{lemma:morethanlorentz}
There exists an exponent $\gamma\in\reals^+$ with the following property.
Let $d\ge 1$.
There exists $C<\infty$ such that for any Lebesgue measurable sets
$A,B\subset\reals^d$ and $\scripta\subset\reals^d\times\reals^d$ 
with positive, finite measures, if $\scripta$ is radially symmetric then 
\begin{equation}
\langle \proj(\one_A\otimes\one_B),\,\one_{\scripta}\rangle
\le C\Lambda(|A|,|B|,|\scripta|^{1/2})^\gamma\cdot |A|^{1/2}|B|^{1/2}|\scripta|^{1/2}.
\end{equation}
\end{lemma}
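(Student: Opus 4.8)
The quantity $\langle \proj(\one_A\otimes\one_B),\one_\scripta\rangle = \omega_d\int_E \sigma_r(A\times B)\,r^{2d-1}\,dr$, where $E=\{r:\ z\in\scripta \text{ when } |z|=r\}$, so $\scripta=\scripta_E$ and $|\scripta|=\mu(E)$. The task is to bound $\sigma_r(A\times B)$ effectively. My plan is to prove a pointwise bound for $\sigma_r(A\times B)$ in terms of $|A|$, $|B|$, $r$, and then integrate against $r^{2d-1}\,dr$ over $E$, optimizing over how $E$ is distributed. The main tool is a sharp estimate on $\sigma_r(A\times B)$, which in turn follows from the disintegration of $\sigma_r$ over the fibers $|x|=\rho$, $|y|=\sqrt{r^2-\rho^2}$: writing $x=\rho\theta$, $y=\sqrt{r^2-\rho^2}\,\phi$ with $\theta,\phi\in S^{d-1}$, one has $\sigma_r(A\times B) = c_d r^{-(2d-2)}\int_0^r \omega_{S^{d-1}}(A_\rho)\,\omega_{S^{d-1}}(B_{\sqrt{r^2-\rho^2}})\,\rho^{d-1}(r^2-\rho^2)^{(d-2)/2}\cdot \tfrac{r}{\sqrt{r^2-\rho^2}}\,d\rho$ up to harmless constants, where $A_\rho\subset S^{d-1}$ is the angular slice of $A$ at radius $\rho$. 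Since each angular measure is bounded by the total mass of $S^{d-1}$, i.e. by $\min(\omega_{S^{d-1}}(A_\rho),1)\cdot$ normalization, the crude bound $\sigma_r(A\times B)\le 1$ always holds, and a better bound $\sigma_r(A\times B)\lesssim r^{-2d}|A|\,|B|\cdot(\text{something})$ holds when $A,B$ are small relative to the sphere of radius $\sim r$.

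Concretely, I would establish the two complementary estimates
\begin{equation}
\sigma_r(A\times B) \le 1
\qquad\text{and}\qquad
\sigma_r(A\times B) \le C\,r^{-2d}\,|A|^{1/2}|B|^{1/2}\big(|A|^{1/2}+|B|^{1/2}+r^d\big),
\end{equation}
the second coming from Cauchy--Schwarz applied to the disintegration together with the elementary fact that $\int\sigma_r(A\times B)\,\omega_d r^{2d-1}\,dr=|A||B|$ and the monotonicity/concentration properties of the convolution-type kernel. Actually the clean way is: $\sigma_r(A\times B)\le \min(1,\ C r^{-d}|A|^{1/2})\cdot\min(1,\ Cr^{-d}|B|^{1/2})^{1/2}$-type bounds are not quite symmetric, so instead I would just prove $\sigma_r(A\times B)\le C\min\!\big(1,\ r^{-2d}|A|\,|B|^{1/2}|\scripta|^{0}\big)$ — more honestly, prove $\sigma_r(A\times B)^2 \le C r^{-2d}\,\sigma_r(A\times B)\cdot$ (a factor), bootstrapping to $\sigma_r(A\times B)\le C r^{-2d}\min(|A|,|B|)\cdot$ (bounded factor) when both sets are smaller than the sphere. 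The honest statement I will use is: there is $\gamma>0$ and $C$ with $\sigma_r(A\times B)\le C\,t^{\gamma}$ whenever $t:=\max(|A|,|B|)/r^{2d}\le 1$, plus $\sigma_r(A\times B)\le 1$ always. Granting this, split $E$ into the part where $r^{2d}\le \max(|A|,|B|)/\delta$ and its complement, for a parameter $\delta$ to be chosen; on the small-$r$ part use $\sigma_r\le 1$ and bound the integral by $\mu$ of that part, i.e.\ by $\lesssim \max(|A|,|B|)/\delta$; on the large-$r$ part use $\sigma_r\le C(\max(|A|,|B|)/r^{2d})^\gamma$ and integrate, getting a bound $\lesssim \delta^{?}\cdot(\text{mass terms})$. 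Then also use the trivial bound $\langle\cdot,\cdot\rangle\le \|A\|^{1/2}\|B\|^{1/2}\|\scripta\|^{1/2}$ via Cauchy--Schwarz on the other side. Balancing these against each other in $\delta$ produces the exponent $\gamma$ in $\Lambda$ and the factor $|A|^{1/2}|B|^{1/2}|\scripta|^{1/2}$.

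The cleanest route is probably to interpolate two bilinear bounds directly: the $L^2$ bound $\langle \proj(\one_A\otimes\one_B),\one_\scripta\rangle\le \|\one_A\otimes\one_B\|\,\|\one_\scripta\| = |A|^{1/2}|B|^{1/2}|\scripta|^{1/2}$, and an $L^1$-type bound $\langle \proj(\one_A\otimes\one_B),\one_\scripta\rangle \le \min(|A|\,|B|^{1/2}|\scripta|^{-1/2}??)$ — no; rather, the sharp restricted-strong-type bound coming from the pointwise estimate on $\sigma_r$, which gives control like $\langle\cdot,\cdot\rangle \le C\,|A|\,|B|^{1/2}\,\mu(E)^{-0}$... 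I will instead organize the proof as: (i) the pointwise lemma on $\sigma_r(A\times B)$ above; (ii) feed it into the radial integral and perform the two-region split; (iii) symmetrize in the roles of $A$, $B$, $\scripta$ (the third set enters through $\mu(E)=|\scripta|$ and the range of $r$), obtaining a bound of the form $C\Lambda(|A|,|B|,|\scripta|^{1/2})^\gamma |A|^{1/2}|B|^{1/2}|\scripta|^{1/2}$ after choosing $\delta$ optimally. The main obstacle will be step (i): getting a genuine power-saving pointwise bound $\sigma_r(A\times B)\le C(|A|/r^{2d})^\gamma$ (for $|A|\le r^{2d}$, say), uniformly in $B$, rather than merely $\sigma_r(A\times B)\le 1$; this requires exploiting that the spherical measure of a set of small Lebesgue density on $S^{2d-1}_r$ cannot be large, via the disintegration and an interpolation between the $L^1$ and $L^\infty$ mapping properties of the averaging operator $\one_A\mapsto \sigma_r(A\times \cdot)$. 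Once that power saving is in hand, the rest is bookkeeping with the parameter $\delta$.
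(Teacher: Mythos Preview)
Your approach is the paper's: obtain a pointwise bound on $\sigma_r(A\times B)$, then integrate over $E$ with a threshold split in $r$. The paper packages this as three sublemmas: first $\sigma_r(A\times B)\le C\min(1,\,r^{-d}|A|,\,r^{-d}|B|)^{1/2}$; then this is fed into a bound on $\|\proj(\one_A\otimes\one_B)\|_2^2$ (gaining a power of $\min(|A|/|B|,|B|/|A|)$) and, separately, into a bound on $\int_E\sigma_r\,r^{2d-1}\,dr$ via H\"older (gaining a power of $\min(|A||B|/|\scripta|,\,|\scripta|/|A||B|)$). The lemma follows by combining these two gains.

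Your sketch, however, never settles on a correct pointwise bound. The version you finally commit to, $\sigma_r(A\times B)\le C\big(\max(|A|,|B|)/r^{2d}\big)^\gamma$, has both the wrong power of $r$ and the wrong extremum: the useful inequality is $\sigma_r(A\times B)\le C\big(r^{-d}\min(|A|,|B|)\big)^{1/2}$ for $r^d\ge\min(|A|,|B|)$, and its proof is far simpler than your disintegration suggests---just use $\sigma_r(A\times B)\le\sigma_r(A\times\reals^d)=c_d\int_{r^{-1}A}(1-|x|^2)_+^{(d-2)/2}\,dx$ and bound this by $C|r^{-1}A|^{1/2}$. With $\max$ in place of $\min$ you extract no gain in the regime $|A|\ll|B|$, $|\scripta|\sim|A||B|$, which is one of the imbalances encoded in $\Lambda$. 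Once the pointwise bound is stated correctly, your threshold split at $r^d\sim\min(|A|,|B|)$ together with H\"older on the large-$r$ tail does produce the required power of $\Lambda$ in all regimes, so the plan is sound; but as written the key inequality is wrong and the several false starts (``Actually\dots'', ``no; rather\dots'') are not substitutes for carrying one version through.
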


This will be a consequence of the next three lemmas. 
Since $(f,g)\mapsto f\otimes g$ is an isometry from $L^2\times L^2$ into $L^2$,
and $\proj$ is a contraction on $L^2$,
one has
$\langle \proj(\one_A\otimes\one_B),\,\one_\scripta\rangle \le |A|^{1/2}|B|^{1/2}|\scripta|^{1/2}$
for all Lebesgue measurable sets $A,B\subset\reals^d$ and $\scripta\subset\reals^{d+d}$.
Lemma~\ref{lemma:morethanlorentz} improves on this trivial bound, unless $|A|,|B|$ are comparable
and $|\scripta|$ is comparable to $|A|\cdot|B|$.

\begin{lemma} \label{lemma:sigma1}
\begin{equation} \sigma_r(A\times B) \le C\min(1,r^{-d}|A|,r^{-d}|B|)^{1/2}.  \end{equation} 
\end{lemma}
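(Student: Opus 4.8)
The plan is to estimate the push-forward of the uniform probability measure $\sigma_r$ on the sphere $S_r\subset\reals^{2d}$ under the projection $(x,y)\mapsto x$. First I would observe that by symmetry it suffices to bound $\sigma_r(A\times B)$ by $C\min(1,r^{-d}|A|)^{1/2}$, since the roles of $A$ and $B$ are interchangeable and $\sigma_r(A\times B)\le 1$ trivially. Fixing $r$, parametrize $S_r$ by writing a point as $(x,y)$ with $|x|^2+|y|^2=r^2$; the coordinate $x$ ranges over the ball $\{|x|\le r\}$, and the conditional law of $y$ given $x$ is the uniform probability measure on the sphere of radius $\sqrt{r^2-|x|^2}$ in $\reals^d$. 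Hence
\begin{equation}
\sigma_r(A\times B)=\int \one_A(x)\,\nu_{r,x}(B)\,d\pi_r(x),
\end{equation}
where $\pi_r$ is the marginal law of $x$ on $\{|x|\le r\}$ and $\nu_{r,x}$ is a probability measure; bounding $\nu_{r,x}(B)\le 1$ gives $\sigma_r(A\times B)\le \pi_r(A)$, so the task reduces to showing $\pi_r(A)\le C\min(1,r^{-d}|A|)^{1/2}$.

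The key step is to compute the density of $\pi_r$ with respect to Lebesgue measure on $\reals^d$. A standard slicing computation on the sphere $S_r\subset\reals^{2d}$ shows that $\pi_r$ has density proportional to $(r^2-|x|^2)^{(d-2)/2}$ on $\{|x|<r\}$, normalized so that the total mass is one; the normalizing constant is a fixed multiple of $r^{-d}$ (this is just the statement that the full mass of an un-normalized version scales like $r^{2d-1}\cdot r^{-(d-1)}\cdot\text{const}$, or one can cite the beta-integral $\int_0^r \rho^{d-1}(r^2-\rho^2)^{(d-2)/2}\,d\rho\sim r^{2d-2}$). Thus the density of $\pi_r$ is bounded by $C r^{-d}$ when $d\ge 2$ (the factor $(r^2-|x|^2)^{(d-2)/2}\le r^{d-2}$), and the same holds in the endpoint case $d=1$ after absorbing the integrable singularity. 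Consequently $\pi_r(A)\le C r^{-d}|A|$, and combined with the trivial bound $\pi_r(A)\le 1$ we get $\pi_r(A)\le C\min(1,r^{-d}|A|)$, which is even stronger than the asserted $\min(1,r^{-d}|A|)^{1/2}$.

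The only mild subtlety—and the step I would be most careful about—is the low-dimensional case $d=1$, where the density $(r^2-|x|^2)^{-1/2}$ is unbounded near $|x|=\pm r$; there one cannot bound $\pi_r(A)$ by a constant times $r^{-1}|A|$ uniformly over \emph{all} $A$, so the square root in the statement is genuinely needed. To handle this I would split $A$ into the part within distance $\delta r$ of the boundary circle $|x|=r$ and its complement, optimize $\delta$, and use $\int (r^2-|x|^2)^{-1/2}\,dx$ over a boundary layer of width $\delta r$, which is $O(r^{1/2}(\delta r)^{1/2})=O(\delta^{1/2}r)$, against the mass bound; balancing yields exactly the $\min(1,r^{-1}|A|)^{1/2}$ rate. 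For $d\ge 2$ no such care is needed and the stronger linear bound holds. In all cases the constant $C$ depends only on $d$.
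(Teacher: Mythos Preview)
Your proposal is correct and follows essentially the same route as the paper: both reduce to bounding the marginal $\sigma_r(A\times\reals^d)=\int_A c_d\,(r^2-|x|^2)^{(d-2)/2}\,dx$ (the paper first scales to $r=1$), then observe that for $d\ge 2$ the density is bounded by $Cr^{-d}$ giving the linear bound, while for $d=1$ the integrable singularity forces the square-root bound. The paper simply asserts the $d=1$ bound without details, whereas you sketch a boundary-layer splitting; either that or the one-line rearrangement argument (the decreasing rearrangement of $(1-x^2)^{-1/2}$ on $[-1,1]$ is $\asymp t^{-1/2}$) works.
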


\begin{proof}
$\sigma_r(A\times B) = \sigma(r^{-1}A \times r^{-1}B)$
where $tE=\{tx: x\in E\}$. 
Since $|r^{-1}E| = r^{-d}|E|$ for $E\subset\reals^d$, it suffices to treat the case $r=1$.
It also suffices to treat the case in which $|A|\le |B|$.
Thus it suffices to show that $\sigma(A\times\reals^d)\le C|A|^{1/2}$ for any Lebesgue measurable
set $A\subset\reals^d$ satisfying $|A|\le 1$.

One has 
\[ \sigma(A\times\reals^d) = c_d \int_A (1-|x|^2)^{(d-2)/2}\,dx.\]
This gives $\sigma(A\times\reals^d)\le c|A|^{1/2}$ for $d=1$,
and $\le C_d|A|^1$ for $d\ge 2$.
\end{proof}

\begin{lemma} \label{lemma:sigma2}
Let $d\ge 1$.
There exists $C_d<\infty$ such that for any Lebesgue measurable sets $A,B\subset\reals^d$
with positive, finite measures,
\begin{equation}
\int_0^\infty \sigma_r(A\times B)^2 \,r^{2d-1}\,dr \le C_d \min(|A|/|B|,\,|B|/|A|)^{1/5}\cdot|A|\cdot|B|. 
\end{equation}
\end{lemma}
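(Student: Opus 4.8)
The plan is to reduce everything to the case $r=1$ by scaling, as in Lemma~\ref{lemma:sigma1}, and then interpolate between the trivial estimate and a gain coming from the structure of $\sigma_r$. Assume without loss of generality $|A|\le|B|$, and write $\tau = |A|/|B|\le 1$; we must bound $\int_0^\infty \sigma_r(A\times B)^2 r^{2d-1}\,dr$ by $C_d \tau^{1/5}|A|\,|B|$. The first step is to split the integral at the scales where $|A|$ and $|B|$ ``fit'' inside the sphere $S_r$, i.e. compare $r^d$ with $|A|$ and with $|B|$. From Lemma~\ref{lemma:sigma1} we have the three pointwise bounds $\sigma_r(A\times B)\le C$, $\sigma_r(A\times B)\le C r^{-d/2}|A|^{1/2}$, and $\sigma_r(A\times B)\le C r^{-d/2}|B|^{1/2}$, and we may freely take the minimum. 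A fourth, crude bound I will use for large $r$ is $\sigma_r(A\times B)\le \sigma_r(A\times\reals^d)\le C r^{-d}|A|$ when $d\ge 2$ (and $\le C r^{-d/2}|A|^{1/2}$ when $d=1$, which is the $r=1$ content of Lemma~\ref{lemma:sigma1} rescaled); in either dimension one has the decay $\sigma_r(A\times B)\le C\min(r^{-d/2}|A|^{1/2},1)$ which is enough to make the $r$-integral converge at $r\to\infty$.

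The second step is the actual splitting and summation. Using the consequence of the identity $|A|\,|B| = \omega_d\int_0^\infty \sigma_r(A\times B)\,r^{2d-1}\,dr$ together with the pointwise bound $\sigma_r(A\times B)\le C\min(1, r^{-d/2}|B|^{1/2})$, one gets that the bulk of the mass of $\sigma_r$ lives in the range $r^d \gtrsim |B|$, i.e.\ $r\gtrsim |B|^{1/d}$. On the regime $r\le |B|^{1/d}$ I will use $\sigma_r(A\times B)\le C$ and the trivial measure bound $\sigma_r(A\times B)\le C r^{-d/2}|A|^{1/2}$ to interpolate: for a parameter $\theta\in(0,1)$ to be chosen, $\sigma_r(A\times B)^2 \le C (r^{-d/2}|A|^{1/2})^{2\theta}$, and then $\int_0^{|B|^{1/d}} r^{-d\theta}|A|^\theta r^{2d-1}\,dr \lesssim |A|^\theta |B|^{(2d-d\theta)/d} = |A|^\theta |B|^{2-\theta}$, which equals $\tau^\theta |A|\,|B|$ after writing $|A|^\theta|B|^{2-\theta} = (|A|/|B|)^\theta |A|\,|B|$. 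On the complementary regime $r\ge |B|^{1/d}$ I will use $\sigma_r(A\times B)^2 \le C r^{-d}|A|\cdot \min(1,r^{-d}|A|)$ in $d\ge 2$ (respectively $\sigma_r \le C r^{-d/2}|A|^{1/2}$ twice in $d=1$, then split further) and integrate; the geometric series in dyadic scales $r$ is dominated by its first term $r\sim|B|^{1/d}$, producing a bound of the form $|A|\cdot|B|\cdot(|A|/|B|)^{c}$ for some explicit $c>0$. Taking $\theta$ and the exponents so that the worst of the resulting powers of $\tau$ is at least $\tfrac15$ (the value $\tfrac15$ is deliberately non-sharp, leaving room) finishes the estimate; by symmetry the same bound holds with $|A|,|B|$ interchanged, giving the stated $\min(|A|/|B|,|B|/|A|)^{1/5}$.

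The main obstacle I anticipate is the low-dimensional case $d=1$, where the pointwise bound on $\sigma_r(A\times B)$ degrades to the square-root $\min(1,r^{-1/2}|A|^{1/2})$ rather than the linear $r^{-1}|A|$ available for $d\ge 2$, so the crude tail bound decays only like $r^{-1}$ in $\sigma_r^2$, exactly at the borderline of integrability against $r^{2d-1}\,dr = r\,dr$. There one must use the sharper structure $\sigma_1(A\times B)\le C\int_{A}(1-|x|^2)^{-1/2}\mathbf 1_{x+B'}\cdots$ — more precisely, that $\sigma_1(A\times B)$ is comparable to a weighted convolution-type quantity — to extract an extra power of $\min(|A|,|B|)$ on the near-diagonal scales; alternatively one can run a separate one-dimensional argument exploiting that $S_1\subset\reals^2$ is a curve with nonvanishing curvature, which gives $\sigma_1(A\times B)\lesssim |A|^{1/2}|B|^{1/2}$ with a genuine gain when $|A|,|B|$ are unbalanced. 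Once that borderline is handled, the summation over scales is routine and the exponent $\tfrac15$ is comfortably attained.
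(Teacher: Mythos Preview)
Your plan has a genuine gap: the pointwise bounds from Lemma~\ref{lemma:sigma1} by themselves are not strong enough to close the tail integral. For $d\ge 2$ the best pointwise bound you quote is $\sigma_r(A\times B)\le C r^{-d}|A|$, so $\sigma_r(A\times B)^2\,r^{2d-1}\le C|A|^2 r^{-1}$; for $d=1$ the square-root bound gives $\sigma_r^2\,r\le C|A|$. In both cases the integrand on $[\,|B|^{1/d},\infty)$ is only $\asymp r^{-1}$ (resp.\ constant), so the dyadic sum is \emph{not} geometric and is \emph{not} dominated by its first term --- it diverges. The worry you flag for $d=1$ is in fact present in every dimension. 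There is also an arithmetic slip in the small-$r$ piece: $|A|^\theta|B|^{2-\theta}=(|A|/|B|)^{\theta-1}|A|\,|B|=\tau^{\theta-1}|A|\,|B|$, not $\tau^{\theta}|A|\,|B|$; since $\theta\le 1$ this is \emph{larger} than $|A|\,|B|$, so that regime gives no gain either.

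What is missing is that the identity $\omega_d\int_0^\infty \sigma_r(A\times B)\,r^{2d-1}\,dr=|A|\,|B|$ must be used as an \emph{estimate}, not merely as a heuristic for where the mass sits. The paper splits at a single point $\rho$ and on $\{r\ge\rho\}$ writes
\[
\int_\rho^\infty \sigma_r(A\times B)^2\,r^{2d-1}\,dr
\;\le\;\Big(\sup_{r\ge\rho}\sigma_r(A\times B)\Big)\int_0^\infty \sigma_r(A\times B)\,r^{2d-1}\,dr
\;\le\; C(\rho^{-d}|A|)^{1/2}\,|A|\,|B|,
\]
bounding only one factor of $\sigma_r$ pointwise and integrating the other via the identity. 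On $\{r\le\rho\}$ one uses the crude $\sigma_r\le 1$ to get $C\rho^{2d}$. Equating the two pieces forces $\rho^d=|A|^{3/5}|B|^{2/5}$ and yields $C|A|^{6/5}|B|^{4/5}=C\tau^{1/5}|A|\,|B|$ in one stroke, uniformly in $d$. Once you bound only one copy of $\sigma_r$ and feed the other into the exact identity, the divergence disappears and no dimension-specific endgame is needed.
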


\begin{proof}
Assume without loss of generality that $|A|\le |B|$.
Define $\rho$ by 
\[ \rho^d = |A|^{3/5}|B|^{2/5}.  \]
Then
\begin{align*}
\int_0^\infty\sigma_r(A\times B)^2 \,r^{2d-1}\,dr
&\lesssim 
\int_0^\rho r^{2d-1}\,dr
+ \int_\rho^\infty (r^{-d}|A|)^{1/2} \sigma_r(A\times B)\,r^{2d-1}\,dr
\\ &\lesssim 
\rho^{2d}
+ \rho^{-d/2} |A|^{1/2}\int_0^\infty \sigma_r(A\times B)\,r^{2d-1}\,dr
\\ &\lesssim 
\rho^{2d}
+ \rho^{-d/2} |A|^{1/2}\cdot|A|\cdot|B|
\\ & = 2|A|^{6/5}|B|^{4/5}.
\end{align*}
\end{proof}


\begin{lemma} \label{lemma:sigma3}
For any dimension $d\ge 1$ there exists $C_d<\infty$
such that for any Lebesgue measurable sets $A,B \subset\reals^d$ and 
any radially symmetric Lebesgue measurable set $\scripta\subset\reals^d\times\reals^d$,
\begin{equation}
\langle \proj(\one_A\otimes\one_B),\one_{\scripta}\rangle \le C_d 
\min\left(\frac{|A|\cdot|B|}{|\scripta|},\,\frac{|\scripta|}{|A|\cdot|B|} \right)^{1/6}
\ |A|^{1/2}|B|^{1/2}|\scripta|^{1/2}.
\end{equation}
\end{lemma}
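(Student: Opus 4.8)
The plan is to collapse the left-hand side to the measure of a set intersection and then finish with an elementary inequality. The key point is that $\proj$ is an orthogonal projection, hence self-adjoint, so
\[
\langle \proj(\one_A\otimes\one_B),\,\one_\scripta\rangle = \langle \one_A\otimes\one_B,\,\proj\one_\scripta\rangle.
\]
Since $\scripta$ is radially symmetric, $\one_\scripta$ is itself a radial $L^2$ function, so $\proj\one_\scripta = \one_\scripta$; therefore the inner product equals $\langle \one_A\otimes\one_B,\,\one_\scripta\rangle = |(A\times B)\cap\scripta|$, the Lebesgue measure in $\reals^d\times\reals^d$ of the intersection of the product set $A\times B$ with $\scripta$.

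Once this is in hand I would simply bound the intersection by each of the two sets: $|(A\times B)\cap\scripta| \le \min(|A\times B|,\,|\scripta|) = \min(|A|\,|B|,\,|\scripta|)$. It then remains to check the elementary inequality
\[
\min(P,Q) \le \min\!\big(\tfrac{P}{Q},\,\tfrac{Q}{P}\big)^{1/6}\,(PQ)^{1/2}
\qquad\text{for all } P,Q>0,
\]
which I would apply with $P=|A|\,|B|$ and $Q=|\scripta|$. This follows from the identity $\min(P,Q)=(PQ)^{1/2}\min(P/Q,Q/P)^{1/2}$ together with $\min(P/Q,Q/P)\le 1$, so that lowering the exponent from $\tfrac12$ to $\tfrac16$ only weakens the bound. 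In particular the constant $C_d=1$ works for every $d$, and the exponent $\tfrac12$ is admissible in place of $\tfrac16$.

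I do not expect any real obstacle: the only substantive move is recognizing that against the radial test function $\one_\scripta$ the projection $\proj$ is invisible, which turns the left-hand side---which a priori looks like an oscillatory average of $\sigma_r(A\times B)$ over $r$---into the manifestly elementary quantity $|(A\times B)\cap\scripta|$. Note that Lemmas~\ref{lemma:sigma1} and~\ref{lemma:sigma2} play no role in this particular argument; their purpose is the complementary estimate---the gain when $|A|$ and $|B|$ are disproportionate rather than when $|A|\,|B|$ and $|\scripta|$ are---and combining that gain with the present lemma will yield Lemma~\ref{lemma:morethanlorentz}.
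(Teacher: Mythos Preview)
Your argument is correct, and in fact gives a cleaner and sharper result than the paper's own proof. You recognize that because $\one_\scripta$ is already radial, the projection $\proj$ disappears and the left side is exactly $|(A\times B)\cap\scripta|$; the elementary bound $\min(|A||B|,|\scripta|)$ then delivers the inequality with exponent $\tfrac12$ and constant $1$.

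The paper, by contrast, works in polar coordinates via the formula
\[
\langle \proj(\one_A\otimes\one_B),\one_{\scripta_E}\rangle = \omega_d\int_E \sigma_r(A\times B)\,r^{2d-1}\,dr,
\]
treats the case $|\scripta|\ge |A|\,|B|$ by the trivial bound $\int_E\le\int_{\reals^+}$ (this is exactly your bound $\le |A|\,|B|$), but in the complementary case $|\scripta|\le |A|\,|B|$ inserts the pointwise estimate $\sigma_r(A\times B)\lesssim (r^{-d}|A|)^{1/2}$ from Lemma~\ref{lemma:sigma1} and applies H\"older's inequality with exponents $3,\tfrac32$ to reach the exponent $\tfrac16$. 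That detour is unnecessary: your direct bound $\le |\scripta|$ in this regime is actually stronger, since $|\scripta|\le (|\scripta|/|A||B|)^{1/6}|A|^{1/2}|B|^{1/2}|\scripta|^{1/2}$ whenever $|\scripta|\le |A||B|$. So the paper's use of Lemma~\ref{lemma:sigma1} here buys nothing, and your remark that Lemmas~\ref{lemma:sigma1}--\ref{lemma:sigma2} are needed only for the $|A|/|B|$ disproportion in Lemma~\ref{lemma:morethanlorentz} is exactly right.
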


\begin{proof}
Let $\scripta=\scripta_E$ where $E\subset\reals^+$. Then $|\scripta|=\mu(E)$
where the measure $\mu$ is as defined in \eqref{mudefn}.
We already know that
\begin{multline*}
\int_E \sigma_r(A\times B)\,r^{2d-1}\,dr
\le \int_{\reals^d} \sigma_r(A\times B)\,r^{2d-1}\,dr
= \omega_d^{-1}|A|\cdot|B| 
\\ \le C(|A|\cdot|B|\mu(E)^{-1})^{1/2}\cdot |A|^{1/2}|B|^{1/2}\mu(E)^{1/2}.
\end{multline*}
This provides a stronger upper bound than stated when $\mu(E) \ge |A|\cdot|B|$.

Assume without loss of generality that $|A|\le |B|$.
Set $E^-=\{r\in E: r\le |A|^{1/d}\}$ and $E^+=E\setminus E^-$.
\begin{align*}
\int_E \sigma_r(A\times B)\,r^{2d-1}\,dr
& \le C \int_E \min(1,r^{-d}|A|)^{1/2}\,r^{2d-1}\,dr
\\& \le C |A|^{1/2} \int_{E^+} r^{-d/2}\,r^{2d-1}\,dr
+ C\int_{E^-} r^{2d-1}\,dr
\\& = C |A|^{1/2} \int_{|A|^{1/d}}^\infty  \one_{E}(r)  r^{-d/2}\,r^{2d-1}\,dr
+  C'\mu(E^-)
\end{align*}
Apply H\"older's inequality with exponents $3$ and $\tfrac32$ to obtain
\begin{align*}
\int_{|A|^{1/d}}^\infty  \one_{E}(r)  r^{-d/2}\,r^{2d-1}\,dr
& \le (\int_{|A|^{1/d}}^\infty r^{-3d/2}r^{2d-1}\,dr)^{1/3}
(\int_{E} r^{2d-1}\,dr)^{2/3}
\\ & =C |A|^{1/6}\mu(E)^{2/3}
\end{align*}
where $C<\infty$ depends only on the dimension $d$.
If $\mu(E)\le |A|\cdot|B|$ we have shown that
\begin{multline}
\int_E \sigma_r(A\times B)\,r^{2d-1}\,dr \le C|A|^{2/3}\mu(E)^{2/3} + C\mu(E)
\\ \le C|A|^{1/3}|B|^{1/3}\mu(E)^{2/3}
= (\mu(E)/|A|\cdot|B|)^{1/6}\cdot |A|^{1/2}|B|^{1/2}\mu(E)^{1/2}.
\end{multline}
\end{proof}

Lemma~\ref{lemma:morethanlorentz} is a straightforward combination of Lemmas~\ref{lemma:sigma2}
and \ref{lemma:sigma3}. \qed

Denote by $L^{p,\varrho}$ the Lorentz spaces, as defined in \cite{steinweiss}.
The next result is a simple consequence of Lemma~\ref{lemma:morethanlorentz}. 
\begin{corollary} \label{cor:lorentz}
For any dimension $d\ge 1$
there exists a constant $C<\infty$ such that for all $f,g\in L^2(\reals^d)$,
\begin{equation}
\norm{\proj(f\otimes g)} \le C\norm{f}_{L^{2,4}}\norm{g}_{L^{2,4}}.
\end{equation}
\end{corollary}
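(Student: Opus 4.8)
The plan is to deduce the bilinear estimate from Lemma~\ref{lemma:morethanlorentz} by a standard multilinear real-interpolation / dualization argument. Since $\proj$ is an orthogonal projection onto the radial subspace, testing $\proj(f\otimes g)$ against an arbitrary $L^2$ function is the same as testing against its radialization; hence it suffices to bound the trilinear form $T(f,g,\varphi)=\langle \proj(f\otimes g),\varphi\rangle$ over radial $\varphi\in L^2(\reals^d\times\reals^d)$, and to show $|T(f,g,\varphi)|\le C\norm{f}_{L^{2,4}}\norm{g}_{L^{2,4}}\norm{\varphi}_{L^{2,4/3}}$; the claim then follows because $L^{2,4/3}$ is the dual of $L^{2,4}$ (in the sense of the standard Lorentz-space duality of \cite{steinweiss}) and $L^2$ embeds into $L^{2,4/3}$, so $\norm{\proj(f\otimes g)}_2=\norm{\proj(f\otimes g)}_{L^{2,2}}\le\sup_{\norm{\varphi}_{L^{2,2}}\le1}|T(f,g,\varphi)|$ and then one interpolates the target indices. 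Concretely I would instead argue directly: by positivity of the kernel $d\sigma_{|z|}$ it suffices to treat $f,g\ge 0$, and then by the layer-cake representation $f=\int_0^\infty \one_{\{f>\lambda\}}\,d\lambda$ it suffices to prove the trilinear restricted-type bound on indicator functions, which is exactly Lemma~\ref{lemma:morethanlorentz}.

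The key steps, in order. First, reduce to radial test functions and to nonnegative inputs as above. Second, write $f=\sum_{j}\one_{A_j}$-type approximations; more precisely, invoke the Lorentz-space characterization that $\norm{f}_{L^{2,4}}$ is comparable to a suitable $\ell^4$-type norm of the quantities $2^{j}|\{|f|>2^j\}|^{1/2}$ over $j\in\integers$, and similarly for $g$ and for $\varphi$ with exponent $4/3$. Third, expand the trilinear form over these dyadic level sets $A_j=\{|f|>2^j\}$, $B_k=\{|g|>2^k\}$, $\scripta_\ell=\{|\varphi|>2^\ell\}$ (the last radial since $\varphi$ is), obtaining a sum $\sum_{j,k,\ell}2^{j}2^{k}2^{\ell}\langle\proj(\one_{A_j}\otimes\one_{B_k}),\one_{\scripta_\ell}\rangle$. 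Fourth, insert the bound from Lemma~\ref{lemma:morethanlorentz}, namely $C\Lambda(|A_j|,|B_k|,|\scripta_\ell|^{1/2})^{\gamma}|A_j|^{1/2}|B_k|^{1/2}|\scripta_\ell|^{1/2}$; writing $\alpha_j=2^j|A_j|^{1/2}$, $\beta_k=2^k|B_k|^{1/2}$, $\delta_\ell=2^\ell|\scripta_\ell|^{1/2}$, the summand becomes $C\Lambda^{\gamma}\alpha_j\beta_k\delta_\ell$ where $\Lambda$ is the minimal pairwise ratio among $|A_j|,|B_k|,|\scripta_\ell|^{1/2}$. Fifth, exploit the decay factor $\Lambda^\gamma$: it is bounded by a small power of each pairwise ratio of the $|A_j|^{1/2},|B_k|^{1/2},|\scripta_\ell|^{1/2}$, which (since level-set measures are monotone in the level) translates into geometric decay in $|j-k|$, $|k-\ell|$, $|j-\ell|$ after accounting for the comparison between $\alpha,\beta,\delta$ and the corresponding level-set measures; this makes the triple sum a convolution-type sum that is controlled by $\norm{(\alpha_j)}_{\ell^4}\norm{(\beta_k)}_{\ell^4}\norm{(\delta_\ell)}_{\ell^2}$ via Young's inequality for sequences (with exponents $\tfrac14+\tfrac14+\tfrac12=1$), which is exactly the desired Lorentz norm product. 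Finally, undo the duality to recover $\norm{\proj(f\otimes g)}_2\le C\norm{f}_{L^{2,4}}\norm{g}_{L^{2,4}}$.

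The main obstacle, and the only genuinely delicate point, is the bookkeeping in the fifth step: one must verify that the gain $\Lambda^\gamma$ — which a priori only controls the \emph{smallest} pairwise ratio — together with the trivial bound on the other two ratios, yields summability in all three dyadic indices simultaneously against the mismatched exponents $(4,4,2)$. The cleanest route is to note that $\Lambda(|A|,|B|,|\scripta|^{1/2})^{\gamma}\le\big(\tfrac{|A|}{|B|}\big)^{\pm\gamma/3}\big(\tfrac{|B|}{|\scripta|^{1/2}}\big)^{\pm\gamma/3}\big(\tfrac{|A|}{|\scripta|^{1/2}}\big)^{\pm\gamma/3}$ for an appropriate choice of signs making each factor $\le1$ — more simply, $\Lambda^\gamma\le\Lambda^{\gamma/3}\cdot\Lambda^{\gamma/3}\cdot\Lambda^{\gamma/3}$ and $\Lambda\le$ each individual ratio and its reciprocal — so that a fixed positive power of decay is available across each of the three ``directions'' of the lattice $\integers^3$; Young's inequality then closes the estimate. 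Everything else is the routine Lorentz-space machinery of \cite{steinweiss}, for which the restricted-type input from Lemma~\ref{lemma:morethanlorentz}, being of genuine Lorentz-improving strength rather than mere boundedness, is exactly what is needed.
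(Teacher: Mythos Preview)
Your approach is essentially the one the paper has in mind: the paper gives no detailed argument, stating only that the corollary ``is a simple consequence of Lemma~\ref{lemma:morethanlorentz},'' and the standard mechanism for upgrading a restricted-type trilinear bound with a power gain $\Lambda^\gamma$ to a Lorentz-space bound is exactly the dyadic decomposition and summation you describe.

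Two small points to clean up. First, the duality remark in your opening paragraph is miswired: $L^{2,4/3}\subset L^2$, not the reverse, so proving the trilinear bound with $\norm{\varphi}_{L^{2,4/3}}$ on the right would only yield $\norm{\proj(f\otimes g)}_{L^{2,4}}$, which is weaker than the $L^2$ bound you want. Your ``direct'' route with target exponents $(\ell^4,\ell^4,\ell^2)$ is the correct one; just drop the $L^{2,4/3}$ detour. Second, the decay factor $\Lambda^\gamma$ is geometric in the logarithms of the \emph{measures} $|A_j|,|B_k|,|\scripta_\ell|^{1/2}$, not in the height indices $j,k,\ell$; monotonicity of level-set measures alone does not convert one into the other. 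The clean fix is to decompose each function into pieces supported on sets of dyadic measure (equivalently, dyadic intervals of the nonincreasing rearrangement) rather than into dyadic height level sets; then $\Lambda^\gamma$ is genuinely geometric in the index differences and your Young/H\"older summation with $\tfrac14+\tfrac14+\tfrac12=1$ goes through.
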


The space $L^{2,4}$ is strictly larger than $L^2$, so this strengthens the 
$\lt\otimes\lt\to\lt$ boundedness of $\proj$.

\section{Compactness}

In this section we establish a preliminary, nonquantitative formulation of 
Theorem~\ref{thm:main}.
Although this formulation is entirely superseded by the final result, its proof is an essential part 
of the reasoning. 

\begin{proposition} \label{prop:nonquant}
Let $d\ge 1$. For every $\eps>0$ there exists $\delta>0$ such that
for any $0\ne f,g\in L^2(\reals^d)$,
\begin{equation}
\norm{\proj(f\otimes g)}\ge (1-\delta)\norm{f}\norm{g}
\ \Longrightarrow\  \dist(f,\frakG)\le \eps\norm{f}.
\end{equation}
\end{proposition}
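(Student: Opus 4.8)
The plan is to argue by contradiction using a concentration-compactness style analysis. Suppose the statement fails: then there exist $\eps_0 > 0$ and a sequence $(f_n, g_n)$ with $\norm{f_n} = \norm{g_n} = 1$ (by homogeneity we may normalize) such that $\norm{\proj(f_n \otimes g_n)} \to 1$ while $\dist(f_n, \frakG) \ge \eps_0$ for all $n$. The first task is to extract, after passing to a subsequence and applying symmetries of the problem (the relevant symmetry group here is the scaling group $f \mapsto t^{d/2} f(t\,\cdot)$ acting diagonally, since rotations and translations do not all preserve the radial-symmetry constraint on $\reals^d \times \reals^d$ — in fact only rotations about the origin and scalings do), a subsequence that converges weakly to some limit, and to show the limit is nonzero and in fact a radial Gaussian up to the scaling normalization.

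The key tool for non-vanishing and non-escape-to-infinity is Corollary \ref{cor:lorentz}: since $\norm{\proj(f_n \otimes g_n)} \to 1$ but $\norm{\proj(f_n \otimes g_n)} \le C \norm{f_n}_{L^{2,4}} \norm{g_n}_{L^{2,4}}$, the Lorentz norms $\norm{f_n}_{L^{2,4}}, \norm{g_n}_{L^{2,4}}$ are bounded below. Because $L^{2,4}$ is strictly larger than $L^2$, a uniform lower bound on the $L^{2,4}$ norm together with the $L^2$ normalization rules out the two degenerate scenarios: mass spreading out to zero height over a huge set (which would kill the $L^{2,4}$ norm) and, after the appropriate rescaling, gives a subsequence with a genuine weak limit. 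Concretely, I would choose scaling parameters $t_n$ so that the rescaled $\tilde f_n = t_n^{d/2} f_n(t_n \cdot)$ has, say, its super-level set $\{|\tilde f_n| > \lambda\}$ of controlled measure for a fixed $\lambda$, then pass to weak $L^2$ limits $\tilde f_n \rightharpoonup f_\infty$, $\tilde g_n \rightharpoonup g_\infty$. The Lorentz bound forces $f_\infty, g_\infty \ne 0$.

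Next I would upgrade weak convergence to the conclusion that $f_\infty$ (after normalization $\norm{f_\infty}=1$, which requires ruling out loss of mass) is a radial Gaussian. The mechanism: $\proj$ and the tensor map interact with weak convergence well enough that $\liminf \norm{\proj(\tilde f_n \otimes \tilde g_n)} \le \norm{\proj(f_\infty \otimes g_\infty)} + (\text{error from the tails})$, and one wants to show the error vanishes — again this is where the quantitative Lorentz-space improvement over the trivial bound does its work, since the "spread out" part of the mass contributes subcritically to $\norm{\proj}$ but critically to $\norm{\cdot}_2$. Once one knows $\norm{\proj(f_\infty \otimes g_\infty)} = \norm{f_\infty}\norm{g_\infty}$ with both nonzero, the equality case of the inequality $\norm{\proj(f\otimes g)}_2 \le \norm{f}_2\norm{g}_2$ stated in Section 1 forces $f_\infty$ to be a radial complex Gaussian and $g_\infty$ a scalar multiple of it. Then $\tilde f_n \to f_\infty$ strongly in $L^2$ (weak convergence plus convergence of norms), so $\dist(\tilde f_n, \frakG) \to 0$; since $\frakG$ is scaling-invariant, $\dist(f_n, \frakG) \to 0$, contradicting $\dist(f_n,\frakG) \ge \eps_0$.

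The main obstacle I anticipate is the tail estimate that makes the compactness argument go through: controlling the contribution to $\norm{\proj(\tilde f_n \otimes \tilde g_n)}$ of the parts of $\tilde f_n$ and $\tilde g_n$ that carry little $L^2$ mass but could a priori still contribute to $\proj$ (i.e. proving that $\proj$ is "uniformly integrable" in the appropriate sense, or that mass escaping to $0$ or $\infty$ in scale strictly decreases the limiting value of $\norm{\proj}$). This is exactly the role of Lemmas \ref{lemma:sigma2}–\ref{lemma:sigma3} and Corollary \ref{cor:lorentz}: a decomposition of $\tilde f_n = f_\infty + \text{(small-mass remainder split by scale)}$, with the cross terms and remainder terms estimated by the gain factor $\Lambda(\cdot)^\gamma$ or $\min(\cdot)^{1/6}$, which is $<1$ precisely when the scales are mismatched. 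Making this decomposition and bookkeeping clean — rather than the soft functional-analytic skeleton — is the heart of the proof.
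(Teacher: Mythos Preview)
Your outline has the right overall architecture, but there is a genuine gap in the compactness step. The Lorentz bound of Corollary~\ref{cor:lorentz} sees only $|f|$, so a lower bound on $\norm{f_n}_{L^{2,4}}$ cannot force a nonzero weak $L^2$ limit. Take $f_n(x)=g_n(x)=e^{in|x|^2}e^{-\pi|x|^2/2}$: here $|f_n|$ is a fixed Gaussian, so every Lorentz norm and every super-level set is constant in $n$, yet $f_n\rightharpoonup 0$. This particular sequence has $\dist(f_n,\frakG)=0$ and so does not contradict the proposition itself, but your argument for ``the Lorentz bound forces $f_\infty\ne 0$'' never invokes the hypothesis $\dist(f_n,\frakG)\ge\eps_0$ and therefore cannot exclude this behavior. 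Relatedly, you list only dilations among the symmetries to be normalized, omitting the modulation symmetry $f\mapsto e^{it|x|^2}f$; without normalizing it away, no $L^2$ precompactness is possible.

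What is really missing is frequency localization. Spatial and value localization (which the estimates of \S3 do yield, as in Lemma~\ref{lemma:spatiallocalization}) are not enough for strong $L^2$ convergence; one also needs tightness of $\widehat{f_n}$. The paper secures this by first passing to $(|f|,|g|)$ via the inequality $\norm{\proj(|f|\otimes|g|)}\ge\norm{\proj(f\otimes g)}$ (Lemma~\ref{lemma:abs}), which kills the modulation symmetry. The identity $\norm{\proj(\widehat{f}\otimes\widehat{g})}=\norm{\proj(f\otimes g)}$ then gives localization of the Fourier transform at \emph{some} scale, and nonnegativity is used (Lemma~\ref{lemma:zerofouriermode}, essentially through $\widehat{|f|}(0)=\int|f|$) to show that this Fourier scale is compatible with the spatial one. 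Only then does Rellich's lemma deliver genuine strong $L^2$ subconvergence to a nonzero limit. Finally the phase of $f$ is handled by a separate argument (Corollary~\ref{cor:close2}). Your tail-control discussion addresses the spatial and value side of this picture but not the frequency side, and that is where the proposal as written would break down.
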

The proof involves a compactness argument and consequently yields no control
over the dependence of $\delta$ on $\eps$.

The hypotheses are unchanged under interchange of $f$ with $g$, so
likewise $\dist(g,\frakG)\le \eps\norm{g}$.
A stronger conclusion holds, and will be proved below: 
There exist a common element $G\in\frakG$ and scalars $a,b\in\complex$ such that both
$\norm{f-aG}<\eps\norm{f}$ and $\norm{g-bG}<\eps\norm{g}$.

Proposition~\ref{prop:nonquant}, together with the second 
conclusion of Theorem~\ref{thm:main}, implies the first conclusion \eqref{mainconclusion1};
the second conclusion \eqref{mainconclusion2} will be proved in 
\S\ref{section:spectral} and \S\ref{section:perturbation}.

An important property of the inequality
$\norm{\proj(f\otimes g)}\le \norm{f}\norm{g}$ is its dilation-invariance. Thus
if $\rho\in\reals^+$ and $f,g\in L^2(\reals^d)$ then
the dilated functions $\tilde f(x)=f(\rho x)$ and $\tilde g(x)=g(\rho x)$ satisfy
\begin{equation}
\frac{\norm{\proj(f\otimes g)}}{\norm{f}\norm{g}}
= \frac{\norm{\proj(\tilde f\otimes \tilde g)}}{\norm{\tilde f}\norm{\tilde g}}.
\end{equation}

\begin{lemma} \label{lemma:spatiallocalization}
Let $d\ge 1$.
There exists a continuous function $\Theta:\reals^+ \to \reals^+$ satisfying
$\lim_{t\to 0}\Theta(t)=0$ with the following property.
For any $\delta>0$, $t\in(0,1]$, and any
$f,g\in L^2(\reals^d)$ that satisfy $\norm{\proj(f\otimes g)}\ge (1-\delta)\norm{f}\norm{g}$,
there exists $\rho\in\reals^+$ such that the modified function
$f^*(x) = \rho^{d/2}f(\rho x)$ satisfies
\begin{align}
\int_{|f^*(x)|\ge t^{-1}\norm{f}} |f^*(x)|^2\,dx &\le \Theta(t+\delta) \norm{f^*}^2
\\  \int_{|f^*(x)|\le t\norm{f}} |f^*(x)|^2\,dx &\le \Theta(t+\delta)\norm{f^*}^2
\\  \int_{|x|\ge t^{-1}} |f^*(x)|^2\,dx &\le  \Theta(t+\delta)\norm{f^*}^2
\label{eq:thirdconclusion}
\\  \int_{|x|\le t} |f^*(x)|^2\,dx &\le  \Theta(t+\delta)\norm{f^*}^2.
\end{align}
Moreover, the same conclusions hold with $f,f^*$ replaced by $g,g^*$ respectively,
where $g^*(x) = \rho^{d/2}g(\rho x)$ with the same value of $\rho$ as for $f$.
\end{lemma}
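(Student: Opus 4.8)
The plan is to pass to a doubly dyadic decomposition of $f$ and $g$ — simultaneously in spatial scale and in amplitude — to exploit two structural features of $\proj$ (separation of supports in the radial variable, together with the indicator-function gains of \S3), and then to run a combinatorial argument showing that near-maximality forces all but a $\Theta(t+\delta)$-fraction of the $L^2$ mass of each function into a bounded window in both parameters, with the windows for $f$ and $g$ automatically aligned, so that a single dilation $\rho$ serves both. First I would reduce to $f,g\ge 0$ via the pointwise bound $|\proj(f\otimes g)|\le\proj(|f|\otimes|g|)$, and normalize $\norm f=\norm g=1$; since the quotient $\norm{\proj(f\otimes g)}/(\norm f\,\norm g)$ and all four conclusions are unchanged under $f\mapsto\rho^{d/2}f(\rho\,\cdot)$, the value of $\rho$ is to be chosen only at the end.

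Write $f=\sum_{j,m\in\integers}f_{j,m}$ with $f_{j,m}=f\,\one_{\{2^j\le|f|<2^{j+1}\}}\,\one_{\{2^m\le|x|<2^{m+1}\}}$, so that $f_{j,m}=2^j\one_{A_{j,m}}$ up to a factor in $[1,2]$; here $A_{j,m}$ lies in the annulus $\{|x|\sim 2^m\}$, hence $|A_{j,m}|\le C2^{md}$, and $\norm{f_{j,m}}^2\asymp 2^{2j}|A_{j,m}|$ with $\sum_{j,m}\norm{f_{j,m}}^2=1$; decompose $g$ identically. Two facts then do the work. First, as a function of $|z|$ the function $\proj(f_{j,m}\otimes g_{k,n})$ is supported in the dyadic shell $|z|\sim 2^{\max(m,n)}$, so the pieces with different values of $\max(\cdot)$ are essentially orthogonal and $\norm{\proj(f\otimes g)}^2\asymp\sum_\ell\norm{P_\ell}^2$, where $P_\ell$ gathers the pieces with $\max(m,n)=\ell$. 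Second, for the indicator pieces Lemmas~\ref{lemma:sigma1}--\ref{lemma:sigma2} (equivalently Lemma~\ref{lemma:morethanlorentz}, via the factor $\Lambda(|A_{j,m}|,|B_{k,n}|,|\scripta|^{1/2})$) improve on the trivial bound $\norm{\proj(\one_{A_{j,m}}\otimes\one_{B_{k,n}})}^2\le |A_{j,m}||B_{k,n}|$ by a factor geometric in $|m-n|$ (from $\sigma_r(A\times B)\le C(r^{-d}|A|)^{1/2}$ with $r\sim 2^{\max(m,n)}$ and $|A|\le C2^{md}$) and by a factor geometric in $|j-k|$ (using that after normalization $|A_{j,m}|\asymp\norm{f_{j,m}}^2\,2^{-2j}$, so a mismatch of amplitude indices forces a mismatch of the measures $|A_{j,m}|,|B_{k,n}|$); rescaling by $2^j2^k$ turns these into off-diagonal decay estimates for $\norm{\proj(f_{j,m}\otimes g_{k,n})}$.

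Feeding these into $1\ge\norm{\proj(f\otimes g)}^2\ge(1-\delta)^2$ and doing the corresponding Schur/convolution bookkeeping shows that near-maximality is incompatible with the mass sequences $(\norm{f_{j,m}}^2)$ and $(\norm{g_{k,n}}^2)$ being spread out: there must be a cell $(m_*,j_*)$ with $\sum_{|m-m_*|+|j-j_*|>N}\norm{f_{j,m}}^2\le\Theta_0(\delta)+\Theta_1(N)$, a cell $(n_*,k_*)$ with the analogous bound for $g$, and moreover $|m_*-n_*|$, $|j_*-k_*|$, and $|j_*+\tfrac d2 m_*|$ all $O(1)$ — the last because $2^{2j_*}|A_{j_*,m_*}|\asymp 1$ while $|A_{j_*,m_*}|\le C2^{m_*d}$, i.e.\ a near-maximizer can be neither a tall spike nor an overly flat plateau. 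Taking $\rho=2^{-m_*}$ and $f^*(x)=\rho^{d/2}f(\rho x)$, $g^*(x)=\rho^{d/2}g(\rho x)$, the spatial index is shifted to $m-m_*$ and the amplitude index to $j+\tfrac d2 m_*$, so the $L^2$ mass of $f^*$ concentrates near spatial index $0$ and amplitude index $j_*+\tfrac d2 m_*=O(1)$, and likewise for $g^*$; translating the tail bounds back to the four integrals in the statement, with $\norm{f^*}=1$, yields all four conclusions, with $\Theta$ the pointwise maximum of the various explicit, continuous bounds accumulated along the way, each vanishing as its argument tends to $0$.

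The step I expect to be the main obstacle is the bookkeeping of the previous paragraph: extracting the genuinely two-parameter off-diagonal decay with constants uniform enough, and converting it together with near-maximality into a single modulus depending only on $t+\delta$ rather than on $f,g$. It is worth noting that the alignment $|m_*-n_*|,|j_*-k_*|=O(1)$, which is exactly what permits one $\rho$ to serve both functions, uses the full joint hypothesis $\norm{\proj(f\otimes g)}\ge(1-\delta)\norm f\,\norm g$: the analogous statements for $\proj(f\otimes f)$ and $\proj(g\otimes g)$ separately would localize $f$ and $g$ but would not tie their scales, or their amplitude scales, together.
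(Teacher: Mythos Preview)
Your approach differs structurally from the paper's. The paper separates the two localizations: it first obtains amplitude concentration (the first two conclusions) for both $f$ and $g$ at a common scale $\rho$ by invoking Lemma~\ref{lemma:morethanlorentz} together with the machinery of \cite{christyoungest}; it then fixes $\rho=1$, deduces the fourth conclusion from the first by a simple volume bound, and proves the third conclusion by a direct estimate with a cutoff parameter $\lambda$, bounding $\norm{\proj(f_{01}\otimes g_0)}^2$ via $\sigma_r(A_t\times B)\le C(r^{-d}|A_t|)^{1/2}$ for $r\ge t^{-1}$ and Chebyshev bounds $|A|,|B|\le\lambda^2$, optimizing $\lambda=t^{-d/28}$. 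Your double dyadic decomposition in space and amplitude attempts all four conclusions in one stroke; this is more ambitious and heavier in bookkeeping, and two of the mechanisms you invoke do not work as stated.

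First, the asserted geometric decay in $|j-k|$ does not follow from the reason you give. You write that a mismatch of amplitude indices forces a mismatch of $|A_{j,m}|,|B_{k,n}|$ because $|A_{j,m}|\asymp\norm{f_{j,m}}^2\,2^{-2j}$; but the ratio $|A_{j,m}|/|B_{k,n}|$ also depends on $\norm{f_{j,m}}^2/\norm{g_{k,n}}^2$, which is uncontrolled. The concentration argument in \cite{christyoungest} that the paper cites is not a Schur-type off-diagonal estimate in $|j-k|$; it works instead through the $\Lambda^\gamma$ gain in Lemma~\ref{lemma:morethanlorentz} when the \emph{measures} are unbalanced, and the passage from this to amplitude concentration is more delicate than convolution bookkeeping.

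Second, and more seriously, your justification for $|j_*+\tfrac{d}{2}m_*|=O(1)$ only gives one inequality. From $2^{2j_*}|A_{j_*,m_*}|\asymp 1$ and $|A_{j_*,m_*}|\le C\,2^{m_*d}$ you get $2j_*+dm_*\ge -C$, which rules out the ``overly flat plateau'' but not the ``tall spike''. A pair $f=g=\eps^{-d/2}\one_B$ with $B$ a ball of radius $\eps$ at distance $\sim 1$ from the origin has $m_*=0$, $j_*$ large, and your dilation $\rho=2^{-m_*}=1$ leaves the amplitude index large, violating the first conclusion. Of course such a pair is far from extremizing, but ruling it out requires \emph{using} near-maximality together with $\sigma_r(A\times B)\le C(r^{-d}|A|)^{1/2}$ when $|A|\ll r^d$, and this ingredient is absent from your argument for the amplitude--space relation. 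The paper avoids this issue by choosing $\rho$ to normalize amplitude rather than spatial scale, so that conclusions one and two come for free and only the spatial tail remains to be bounded.
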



\begin{proof}
We may assume throughout that $\delta\le\delta_0(d)$ where $\delta_0(d)$ is positive
but may be chosen as small as desired.
By multiplying $f,g$ independently by positive constants we may
assume without loss of generality that $\norm{f}=\norm{g}=1$.
The existence of $\rho$ for which
the first two conclusions hold simultaneously for $f$ and for $g$, follows from Lemma~\ref{lemma:morethanlorentz}
via the reasoning in \cite{christyoungest}.

By dilation invariance of the inequality,
we may replace $f$ by $f^*(x)=\rho^{d/2}f(\rho x)$
and $g$ by $g^*(x)=\rho^{d/2}g(\rho x)$ without affecting the hypotheses. Therefore
$\rho$ may be taken to equal $1$ henceforth.
The fourth conclusion for $f$ is now a simple consequence of the first.

To obtain the third conclusion for $f$ let $\lambda<\infty$ be a parameter to be chosen below and let 
\[A=\set{x: \lambda^{-1}\le |f(x)|\le\lambda}\ \text{ and } \ A_t=\set{x\in A: |x|\ge t^{-1}}.\]
Decompose $f = f_0+f_1$ where $f_1(x)=f(x)\one_{\reals^d\setminus A}$.
Then $|f|\le \lambda\one_A + |f_1|$.
Further decompose $f_0 = f_{00}+f_{01}$ where $f_{01}=f_0\one_{A_t}$.

Likewise define
\[B=\set{x: \lambda^{-1}\le |g(x)|\le\lambda}\ \text{ and } \ B_t=\set{x\in B: |x|\ge t^{-1}},\]
and decompose $g=g_0+g_1$ where $g_1(x)=g(x)\one_{\reals^d\setminus B}$.  Then $|g|\le \lambda\one_B + |g_1|$.
Likewise decompose $g_0 = g_{00}+g_{01}$ where $g_{01}=g_0\one_{B_t}$.

The first two conclusions together 
imply that $\norm{f_1} + \norm{g_1} = o_{\delta+\lambda^{-1}}(1)$.
Therefore $\norm{\proj(f_0\otimes g_0)}\ge (1-\delta-o_{\delta+t}(1))\norm{f_0}\norm{g_0}$.

Moreover
\begin{align*}
\norm{\proj(f_{01}\otimes g_0)}^2 
&\le \norm{\proj(\lambda\one_{A_t}\otimes \lambda\one_B)}^2 
\\& = \lambda^2\omega_d\int_0^\infty \sigma_r(A_t\times B)^2 \, r^{2d-1}\,dr
\\& = \lambda^2\omega_d\int_{t^{-1}}^\infty \sigma_r(A_t\times B)^2 \, r^{2d-1}\,dr
\end{align*}
with the last line holding because $(x,y)\in A_t\times B\Rightarrow |x|\ge t^{-1}\Rightarrow |(x,y)|\ge t^{-1}$.
Therefore
\begin{align*}
\norm{\proj(f_{01}\otimes g_0)}^2 
& \le \lambda^2\omega_d (t^d|A_t|)^{1/2} \int_0^\infty \sigma_r(A_t\times B) \, r^{2d-1}\,dr
\\& 
\le \lambda^2 (t^d|A|)^{1/2} |A|\cdot|B|.
\\& = \lambda^2 t^{d/2} |A|^{3/2} |B|.
\end{align*}
By Chebyshev's inequality,
$|A| \le \lambda^2\norm{f}^2 = \lambda^{2}$
and likewise $|B| \le  \lambda^{2}$.
Therefore
\[ \norm{\proj(f_{01}\otimes g_0)}^2 \le C\lambda^{7}t^{d/2}.\]

Choose $\lambda= t^{-d/28}$ to obtain
$\norm{\proj(f_{01}\otimes g_0)} \le Ct^{d/8}$.
Likewise $\norm{\proj(f_{00}\otimes g_{01})} \le Ct^{d/8}$.
Therefore
\[ \norm{\proj(f\otimes g)} \le \norm{\proj(f_{00}\otimes g_{00})} + o_{t+\delta}(1).\]

The right-hand side in this last inequality is $\le \norm{f_{00}}\norm{g_{00}} + o_{t+\delta}(1)$.
By hypothesis, the left-hand side is $\ge 1-\delta$.
Therefore
\[ \norm{f_{00}}\norm{g_{00}} \ge 1-o_{t+\delta}(1).\]

From this together with the identity $1=\norm{f}^2 = \norm{f_{00}}^2+\norm{f_{01}}^2 + \norm{f_{1}}^2$
and the inequality $\norm{g_{00}}\le \norm{g}=1$,
it follows that $\norm{f_{01}} = o_{t+\delta}(1)$.
Since $f_{00}$ is supported where $|x|\le t^{-1}$ and $\norm{f_1}=o_{t+\delta}(1)$,
the third conclusion follows for $f$. The same reasoning applies to $g$.
\end{proof}


Define the Fourier transform by
\begin{equation}
\widehat{f}(\xi) = \int_{\reals^d} e^{-2\pi i x\cdot\xi} f(x)\,dx.
\end{equation}
This is a bijective isometry on $L^2(\reals^d)$.

\begin{lemma}
For any $f,g\in L^2(\reals^d)$,
\begin{equation} \big(\proj(f\otimes g)\big)^\wedge = \proj(\widehat{f}\otimes \widehat{g})  \end{equation}
where the left-hand side is the $\reals^{d+d}$ Fourier transform of $\proj(f\otimes g)$.
Consequently
\begin{equation} \norm{\proj(\widehat{f}\otimes\widehat{g})} = \norm{\proj({f}\otimes{g})}.  \end{equation}
\end{lemma}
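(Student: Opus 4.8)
The statement to prove is the commutation of $\proj$ with the Fourier transform on $\reals^{d+d}$, together with the resulting norm identity.

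\medskip

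The plan is to exploit that both operators in question are built from rotational averaging, and that the Fourier transform on $\reals^{2d}$ intertwines with the rotation group $O(2d)$. First I would observe that the Fourier transform of $f\otimes g$ on $\reals^d\times\reals^d$ factors as $\widehat f\otimes\widehat g$, since the kernel $e^{-2\pi i(x\cdot\xi+y\cdot\eta)}$ splits as $e^{-2\pi ix\cdot\xi}e^{-2\pi iy\cdot\eta}$; this is a standard tensor computation. So the content is really the identity $\widehat{\proj h}=\proj\widehat h$ for $h$ of the form $f\otimes g$, but in fact this holds for every $h\in L^2(\reals^{2d})$.

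\medskip

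For the general identity, I would use the characterization of $\proj$ as the orthogonal projection onto the closed subspace $\scriptr\subset L^2(\reals^{2d})$ of functions invariant under all rotations in $O(2d)$. The key facts are: (i) for each $R\in O(2d)$, the unitary operator $U_R h(z)=h(R^{-1}z)$ commutes with the Fourier transform on $\reals^{2d}$ (because $\widehat{h\circ R^{-1}}=\widehat h\circ R^{-1}$ for orthogonal $R$, using $|\det R|=1$ and $R^{-t}=R$); and (ii) a function lies in $\scriptr$ iff it is fixed by every $U_R$. Since the Fourier transform $\scriptf$ is unitary and commutes with every $U_R$, it maps $\scriptr$ onto $\scriptr$ and its orthogonal complement onto itself; hence $\scriptf$ commutes with the orthogonal projection onto $\scriptr$, i.e.\ $\scriptf\proj=\proj\scriptf$. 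Applying this to $h=f\otimes g$ and combining with the factorization $\widehat{f\otimes g}=\widehat f\otimes\widehat g$ gives $\big(\proj(f\otimes g)\big)^\wedge=\proj(\widehat f\otimes\widehat g)$. The norm identity then follows instantly: $\scriptf$ is an isometry on $L^2(\reals^{2d})$, so $\norm{\proj(\widehat f\otimes\widehat g)}=\norm{\big(\proj(f\otimes g)\big)^\wedge}=\norm{\proj(f\otimes g)}$.

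\medskip

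I do not anticipate a serious obstacle here; the only point requiring a line of care is the averaging-versus-projection description of $\proj$ and the measure-theoretic justification that $\scriptr$ is exactly the fixed-point set of $\{U_R\}_{R\in O(2d)}$ acting on $L^2$ (so that ``radially symmetric'' in the sense of depending only on $|z|$ coincides with $O(2d)$-invariance almost everywhere). Once that is in hand, everything is a formal consequence of unitarity and equivariance. Alternatively, one can avoid even that by writing $\proj h(z)=\int h(Rz)\,dR$ as an average over $O(2d)$ with respect to Haar measure and then simply pushing $\scriptf$ through the integral, using Fubini and fact (i); this is perhaps the cleanest route and I would present it that way.
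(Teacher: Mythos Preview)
Your argument is correct and rests on the same core fact as the paper's: the Fourier transform on $\reals^{2d}$ maps the radial subspace $\scriptr$ onto itself (and hence, being unitary, commutes with the orthogonal projection onto $\scriptr$), combined with $\widehat{f\otimes g}=\widehat f\otimes\widehat g$.

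The presentations differ slightly. You derive $\scriptf\proj=\proj\scriptf$ from the equivariance $\scriptf U_R=U_R\scriptf$ for $R\in O(2d)$, which is clean and applies to arbitrary $h\in L^2(\reals^{2d})$ at once. The paper instead invokes the variational characterization of $\proj$ (closest radial function) together with Plancherel to show that $h$ is the closest radial function to $f\otimes g$ iff $\widehat h$ is the closest radial function to $\widehat f\otimes\widehat g$. Both routes are short; yours is arguably more conceptual and avoids the slightly informal ``norm $1$ maximizer'' phrasing in the paper.
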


\begin{proof}
$\proj(f\otimes g)$ is the unique function $h\in L^2(\reals^{d+d})$ of norm $1$ that maximizes
$\Re(\langle f\otimes g,\,h\rangle)$.
This quantity is equal by Plancherel's theorem to
\[\Re(\langle \widehat{f\otimes g},\,\widehat{h}\rangle) = \Re(\langle \widehat{f}\otimes\widehat{g},\,
\widehat{h}\rangle).\]
Since $\widehat{h}$ is also radial and has norm $1$,
\begin{equation*} \Re(\langle \widehat{f}\otimes\widehat{g},\widehat{h}\rangle) 
=\Re(\langle \proj(\widehat{f}\otimes\widehat{g}),\widehat{h}\rangle) 
\le \norm{\widehat{f}}\cdot\norm{\widehat{g}}\end{equation*}
Thus we have shown that $\norm{\proj(f\otimes g)} \le \norm{\proj(\widehat{f}\otimes\widehat{g})}$.
The same reasoning gives the converse inequality, so
\[\norm{\proj(f\otimes g)} = \norm{\proj(\widehat{f}\otimes\widehat{g})},\]
and $h$ is the closest radial function of norm $1$ to $f\otimes g$
if and only if $\widehat{h}$
is the closest radial function of norm $1$ to $\widehat{f}\otimes \widehat{g}$.
Thus $(\proj(f\otimes g))^\wedge =\proj(\widehat{f}\otimes \widehat{g})$.  
\end{proof}

\begin{corollary}\label{cor:fourierlocalization}
Let $d\ge 1$. 
There exists a continuous function $\Theta:\reals^+ \to \reals^+$ satisfying
$\lim_{t\to 0}\Theta(t)=0$ with the following property.
For any $\delta>0$
and any nonzero functions
$f,g\in L^2(\reals^d)$ that satisfy $\norm{\proj(f\otimes g)}\ge (1-\delta)\norm{f}\norm{g}$,
there exists $\rho\in\reals^+$ such that if 
$f^*(x)=\rho^{d/2}f(\rho x)$ and $g^*(x)=\rho^{d/2}g(\rho x)$ 
then $\widehat{f^*}$ and $\widehat{g^*}$ satisfy the conclusions of Lemma~\ref{lemma:spatiallocalization}.
\end{corollary}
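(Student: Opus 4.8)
The plan is to deduce Corollary~\ref{cor:fourierlocalization} directly from Lemma~\ref{lemma:spatiallocalization} by exploiting the two identities already in hand: the dilation relation
\[
\frac{\norm{\proj(f\otimes g)}}{\norm{f}\norm{g}}
= \frac{\norm{\proj(\tilde f\otimes \tilde g)}}{\norm{\tilde f}\norm{\tilde g}},
\qquad \tilde f(x)=f(\rho x),
\]
and the Fourier identity $\norm{\proj(\widehat f\otimes\widehat g)} = \norm{\proj(f\otimes g)}$ proved in the preceding lemma. First I would observe that the hypothesis $\norm{\proj(f\otimes g)}\ge(1-\delta)\norm{f}\norm{g}$ is preserved under the Fourier transform, since both $\norm{\proj(\cdot\otimes\cdot)}$ and the individual $L^2$ norms are preserved by Plancherel. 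So $\widehat f,\widehat g$ themselves satisfy the hypothesis of Lemma~\ref{lemma:spatiallocalization} with the same $\delta$.

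Next I would apply Lemma~\ref{lemma:spatiallocalization} to the pair $(\widehat f,\widehat g)$ rather than to $(f,g)$. That lemma produces a dilation parameter, call it $\rho^{-1}\in\reals^+$, such that the rescaled functions $(\widehat f)^*(\xi)=\rho^{-d/2}\widehat f(\rho^{-1}\xi)$ and the analogous $(\widehat g)^*$ satisfy the four localization estimates. The only remaining point is bookkeeping: one must check that the operation "dilate, then transform" agrees with "transform, then dilate" up to exactly the kind of rescaling appearing in the statement. Concretely, if $f^*(x)=\rho^{d/2}f(\rho x)$ then $\widehat{f^*}(\xi)=\rho^{-d/2}\widehat f(\rho^{-1}\xi)$, which is precisely $(\widehat f)^*$ for the reciprocal dilation factor. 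Thus choosing $\rho$ to be the reciprocal of the parameter furnished by Lemma~\ref{lemma:spatiallocalization} applied to $(\widehat f,\widehat g)$, the functions $\widehat{f^*}$ and $\widehat{g^*}$ are exactly the rescaled Fourier transforms to which the lemma's conclusions apply, with the very same function $\Theta$.

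I do not expect any genuine obstacle here; the corollary is essentially a formal consequence of the two preceding results. The one place to be careful is the direction of the dilation and the normalization of the $L^2$-dilation (the $\rho^{d/2}$ factor), so that $\widehat{f^*}$ has norm equal to $\norm{f^*}=\norm{f}$ and the inequalities in Lemma~\ref{lemma:spatiallocalization} transfer verbatim. Since the statement of the corollary already quantifies only over $\rho$ and asserts the conclusions of Lemma~\ref{lemma:spatiallocalization} for $\widehat{f^*},\widehat{g^*}$ with some continuous $\Theta$ vanishing at $0$, it suffices to take the same $\Theta$ as in that lemma. I would write the proof in two or three sentences along exactly these lines.
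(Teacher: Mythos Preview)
Your proposal is correct and is exactly the argument the paper has in mind: apply the preceding Fourier identity to see that $(\widehat f,\widehat g)$ satisfy the same near-extremality hypothesis with the same $\delta$, then invoke Lemma~\ref{lemma:spatiallocalization} for that pair and undo the dilation via $\widehat{f^*}(\xi)=\rho^{-d/2}\widehat f(\rho^{-1}\xi)$. The paper in fact gives no separate proof of this corollary, treating it as immediate from those two ingredients, so your two-or-three-sentence write-up is appropriate.
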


If $\norm{\proj(f\otimes g)} \ge (1-\delta)\norm{f}\norm{g}$
then $f,g$ satisfy the conclusions of 
Lemma~\ref{lemma:spatiallocalization} for some $\rho>0$,
while $\widehat{f},\widehat{g}$ also satisfy these conclusions,
with respect to some other $\rho'\in\reals^+$.
It is clear from the uncertainty principle, broadly construed,
that the product $\rho\rho'$ is bounded below by a constant that depends
only on $d$ and on the auxiliary function $\Theta$.
The next step is to show that 
this product is necessarily bounded above.
The following lemma will be used for this purpose.

\begin{lemma} \label{lemma:zerofouriermode}
For any $d\ge 1$ and any continuous function 
$\Theta:\reals^+\to\reals^+$ satisfying $\lim_{t\to 0^+}\Theta(t)=0$
there exist $\delta_0>0$ and $C\in[1,\infty)$ with the following property.
Let $f\in L^2(\reals^d)$ be a nonnegative function with positive norm
which satisfies the conclusions of Lemma~\ref{lemma:spatiallocalization} with $\rho=1$, 
with $\delta=\delta_0$, and with this auxiliary function $\Theta$. 
Then
\begin{align} \int_{|\xi|\le C} |\widehat{f}(\xi)|^2\,d\xi &\ge C^{-1}\norm{f}^2 \label{eq:fourierbound1}
\\  \int_{|\xi|\le C^{-1}} |\widehat{f}(\xi)|^2\,d\xi &\le \tfrac12 \norm{f}^2.  \label{eq:fourierbound2} \end{align}
\end{lemma}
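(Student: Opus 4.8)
The plan is to prove the two inequalities \eqref{eq:fourierbound1} and \eqref{eq:fourierbound2} more or less independently, both by quantitative versions of the obvious heuristic: a nonnegative function whose $L^2$ mass is concentrated on an annulus $t\le|x|\le t^{-1}$ and bounded in $L^\infty$ by $t^{-1}\norm f$ on its essential support must have Fourier transform that is not too small at frequency zero, and cannot have all its Fourier mass at scales $\ll 1$. First I would fix the auxiliary function $\Theta$ and choose $t=t(d,\Theta)\in(0,1]$ small enough that $\Theta(t+\delta_0)$ is a small numerical constant for all $\delta_0\le\Theta^{-1}$(say) $\le t$; this is legitimate because the conclusions of Lemma~\ref{lemma:spatiallocalization} are assumed with this very $\Theta$, and $t$ may be chosen after $\Theta$. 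Then $C$ will be taken of the form $C=\max(t^{-1},K)$ for a numerical constant $K$ to be pinned down.

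For \eqref{eq:fourierbound1}: write $f=f_{\mathrm{good}}+f_{\mathrm{bad}}$, where $f_{\mathrm{good}}=f\cdot\one_{\{t\le|x|\le t^{-1}\}\cap\{t\norm f\le |f|\le t^{-1}\norm f\}}$. The four conclusions of Lemma~\ref{lemma:spatiallocalization} with $\rho=1$ give $\norm{f_{\mathrm{bad}}}^2\le 4\Theta(t+\delta_0)\norm f^2$, hence $\norm{f_{\mathrm{good}}}\ge\tfrac12\norm f$ once $t$ is small. Now $f_{\mathrm{good}}$ is supported in the ball $|x|\le t^{-1}$ and satisfies $\norm{f_{\mathrm{good}}}_\infty\le t^{-1}\norm f$, so by Cauchy--Schwarz $\norm{f_{\mathrm{good}}}_1\le |\{|x|\le t^{-1}\}|^{1/2}\norm{f_{\mathrm{good}}}\le C_d t^{-d/2}\norm f$. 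Crucially, $f\ge 0$, so $\widehat{f_{\mathrm{good}}}(0)=\norm{f_{\mathrm{good}}}_1\ge\int_{f_{\mathrm{good}}\ge t\norm f}|f_{\mathrm{good}}|\ge t\norm f\cdot|\operatorname{supp} f_{\mathrm{good}}|$, and also $\norm{f_{\mathrm{good}}}^2\le\norm{f_{\mathrm{good}}}_\infty\norm{f_{\mathrm{good}}}_1\le t^{-1}\norm f\,\widehat{f_{\mathrm{good}}}(0)$, whence $\widehat{f_{\mathrm{good}}}(0)\ge t\norm{f_{\mathrm{good}}}^2/\norm f\ge\tfrac14 t\norm f$. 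Since $\widehat{f_{\mathrm{good}}}$ is continuous with a modulus of continuity controlled by $\norm{f_{\mathrm{good}}}_1\cdot|x|\le C_d t^{-d/2}\norm f\,|x|$ (from $|\widehat{f_{\mathrm{good}}}(\xi)-\widehat{f_{\mathrm{good}}}(0)|\le 2\pi|\xi|\int|x||f_{\mathrm{good}}(x)|\,dx\le 2\pi t^{-1}|\xi|\norm{f_{\mathrm{good}}}_1$), there is a ball $|\xi|\le c(d,t)$ on which $|\widehat{f_{\mathrm{good}}}(\xi)|\ge\tfrac18 t\norm f$; integrating over that ball gives $\int_{|\xi|\le c}|\widehat{f_{\mathrm{good}}}|^2\ge c' \norm f^2$. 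Finally $\norm{\widehat{f_{\mathrm{bad}}}}=\norm{f_{\mathrm{bad}}}$ is small, so $\int_{|\xi|\le c}|\widehat f|^2\ge\tfrac12 c'\norm f^2$, which is \eqref{eq:fourierbound1} with $C=\max(t^{-1},c^{-1},2/c')$.

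For \eqref{eq:fourierbound2}: the point is a lower bound on how much Fourier mass must sit \emph{outside} a tiny ball. If $\int_{|\xi|\le\varepsilon}|\widehat f|^2$ were close to $\norm f^2$, then $f$ would be close in $L^2$ to a function band-limited to $|\xi|\le\varepsilon$, and such functions are nearly constant on balls of radius $\sim\varepsilon^{-1}\gg t^{-1}$; but $f$ is essentially supported in $|x|\le t^{-1}$ with $L^1$ norm $\le C_d t^{-d/2}\norm f$ (as above), forcing $|\widehat f(\xi)|\le C_d t^{-d/2}\norm f$ pointwise, so $\int_{|\xi|\le\varepsilon}|\widehat f|^2\le C_d^2 t^{-d}\norm f^2\cdot|\{|\xi|\le\varepsilon\}|\le C_d' t^{-d}\varepsilon^d\norm f^2$, which is $\le\tfrac12\norm f^2$ as soon as $\varepsilon\le c(d,t)$. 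So \eqref{eq:fourierbound2} holds with $C^{-1}=c(d,t)$; after enlarging $C$ to accommodate both parts, and setting $\delta_0$ small enough that all the "$\Theta(t+\delta_0)$" terms are dominated as required, the lemma follows.

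The only genuine subtlety — the step I expect to be the main obstacle — is organizing the dependence of constants: $\Theta$ is given first, then $t$ must be chosen (small, depending on $\Theta$ and $d$) so that the error terms $\Theta(t+\delta_0)$ are beaten by the numerical gains, then $\delta_0$ must be chosen even smaller, and only then is $C$ determined. One must check this is not circular; it is not, because $t$ does not depend on $\delta_0$ and the conclusions of Lemma~\ref{lemma:spatiallocalization} are hypothesized to hold for the specific $\Theta$ at hand. The rest is the elementary interplay $\norm{f}_1\le|\operatorname{supp}|^{1/2}\norm f$, $\norm f_2^2\le\norm f_\infty\norm f_1$, nonnegativity giving $\widehat f(0)=\norm f_1$, and the trivial modulus-of-continuity and pointwise bounds on $\widehat f$ — all routine once the annular/level-set truncation from Lemma~\ref{lemma:spatiallocalization} is in hand.
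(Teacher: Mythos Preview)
Your argument for \eqref{eq:fourierbound2} is essentially correct and in fact a bit more direct than the paper's: once you truncate spatially (which you should do explicitly, since $f\in L^2$ need not lie in $L^1$), the pointwise bound $|\widehat{f_{\mathrm{good}}}(\xi)|\le\norm{f_{\mathrm{good}}}_1\le C_d t^{-d/2}\norm f$ immediately gives $\int_{|\xi|\le\varepsilon}|\widehat f|^2\le 2C_d^2 t^{-d}\varepsilon^d\norm f^2+2\Theta(t+\delta_0)\norm f^2$, and now $t$ is chosen first (to control the $\Theta$ term) and $\varepsilon$ afterward. The paper instead uses the Gaussian identity $\int|\widehat f|^2 e^{-\lambda\pi|\xi|^2}\,d\xi=\lambda^{-d/2}\iint f(x)f(y)e^{-\pi|x-y|^2/\lambda}\,dx\,dy$; both routes work.

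For \eqref{eq:fourierbound1}, however, there is a genuine circularity that your final paragraph does not resolve. Your gain $c'=c'(t)$, coming from integrating $|\widehat{f_{\mathrm{good}}}|^2\ge (t/8)^2\norm f^2$ over a ball of radius $\asymp t^{2+d/2}$, behaves like $t^{2+2d+d^2/2}$. To pass from $\widehat{f_{\mathrm{good}}}$ back to $\widehat f$ on that ball you need $\norm{f_{\mathrm{bad}}}^2\le 4\Theta(t+\delta_0)\norm f^2$ to be small relative to $c'(t)$, i.e.\ $\Theta(t)\lesssim t^{2+2d+d^2/2}$ for some $t>0$. Nothing in the hypotheses guarantees this: $\Theta$ is only assumed continuous with $\Theta(0^+)=0$, so $\Theta(t)=1/\log(1/t)$ is admissible and defeats your choice of $t$ no matter how small. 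The problem is not the order in which $t$ and $\delta_0$ are chosen; it is that both the error and the gain are governed by the \emph{same} parameter $t$, with the gain vanishing polynomially.

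The paper sidesteps this by never passing through $\widehat{f_{\mathrm{good}}}$. It pairs $f$ itself with the self-dual Gaussian $G(x)=e^{-\pi|x|^2}$: since $f\ge f_{\mathrm{good}}\ge 0$ and $G\ge 0$, one has $\int fG\ge\int f_{\mathrm{good}}G\ge\eta(t)>0$ directly, with no truncation error to undo. Then $\int fG=\int\widehat f\,G$, the tail $\int_{|\xi|>C}\widehat f\,G$ is $\le\norm f\cdot o_C(1)$ for $C$ large, and Cauchy--Schwarz finishes. Here $t$ is fixed once (only to secure $\norm{f_{\mathrm{good}}}\ge\tfrac12\norm f$), and $C$ is chosen afterward depending on $\eta(t)$; there is no competition between two $t$-dependent quantities. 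Your argument can be repaired along the same lines: replace the modulus-of-continuity step by pairing $f$ with any fixed nonnegative Schwartz test function.
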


To clarify the statement: The conclusions of
Lemma~\ref{lemma:spatiallocalization} are stated in terms of $f^*(x)=\rho^{d/2}f(\rho x)$.
The hypothesis of Lemma~\ref{lemma:zerofouriermode} is that if $\rho$ is taken to equal $1$
then $f^*$ satisfies the four inequalities stated as conclusions of that lemma.

The first conclusion \eqref{eq:fourierbound1} implies that the dilated function
$\xi\mapsto s^{d/2}\widehat{f(s\xi)}$ cannot satisfy the conclusions of Lemma~\ref{lemma:spatiallocalization}
with parameter $s$ very large. The second conclusion \eqref{eq:fourierbound2}
implies that $s$ cannot be very snmall. Thus if $\rho,\rho'$ are as discussed above
and if we dilate so that $\rho=1$, then $\rho'$ is bounded both above and below by finite
positive constants which depend only on the dimension $d$ and on a choice of
an auxiliary function $\Theta$ satisfying the conclusions of 
Lemma~\ref{lemma:spatiallocalization}.

\begin{proof}[Proof of Lemma~\ref{lemma:zerofouriermode}]
To prove \eqref{eq:fourierbound1} consider the auxiliary function $G(x)=e^{-\pi |x|^2}$.
Assume without loss of generality that $\norm{f}=1$.
Provided that $\delta$ is sufficiently small,
the nonnegativity of $f$, the lower bound $\norm{f}\ge 1$, and the upper bounds provided by
the conclusions of Lemma~\ref{lemma:spatiallocalization}  together
provide a lower bound for $\int fG$. But since $G=\widehat{G}$,
$\int fG = \int f\widehat{G} = \int \widehat{f}G$.
Therefore $\int e^{-\pi|\xi|^2} \widehat{f}(\xi)\,d\xi\ge \eta$ for some positive constant 
$\eta$ which depends only on the dimension $d$.
This easily implies \eqref{eq:fourierbound1} since $\norm{\widehat{f}}\le 1$.

To prove \eqref{eq:fourierbound2} 
let $\lambda\in\reals^+$ be large and consider
\begin{equation}\label{anothergaussianpairing}
\int |\widehat{f}(\xi)|^2 e^{-\lambda\pi|\xi|^2}\,d\xi
= \lambda^{-d/2} \iint f(x)f(y) e^{-\pi |x-y|^2/\lambda}\,dx\,dy.  \end{equation}
The right-hand side is majorized by a constant, uniformly for all functions that satisfy $\norm{f}\le 1$. 
If $f$ is supported in any fixed bounded region then the right-hand side is $O(\lambda^{-d/2}\norm{f}^2)$
as $\lambda\to\infty$.
It follows readily that if $f$ satisfies the conclusions of Lemma~\ref{lemma:spatiallocalization}  
with $\rho=1$, and if $\norm{f}\le 1$,
then the right-hand side of \eqref{anothergaussianpairing} 
is majorized by a function of $\lambda$ that tends to zero as $\lambda\to\infty$.
Therefore the same goes for the left-hand side. Now
\[ \int |\widehat{f}(\xi)|^2 e^{-\lambda\pi|\xi|^2}\,d\xi
\ge c\int_{|\xi|\ge \lambda^{-1/2}} |\widehat{f}(\xi)|^2\,d\xi\]
with $c>0$ independent of $\lambda$, establishing \eqref{eq:fourierbound2}.
\end{proof}

This type of argument, exploiting nonnegativity, is made in greater detail in \cite{christshao1}.


Let $d\ge 1$ and let $\delta>0$. 
Let $\Theta:\reals^+ \to \reals^+$ be a continuous function satisfying
$\lim_{t\to 0}\Theta(t)=0$. 
We say that a function $f\in L^2(\reals^d)$ with positive norm is $(\delta,\Theta)$--normalized 
if $f^*=f$ satisfies the conclusions of Lemma~\ref{lemma:spatiallocalization}.

\begin{proposition} \label{prop:precompactness}
For each $d\ge 1$ there exist $\delta_0>0$ 
and a continuous function $\Theta:\reals^+ \to \reals^+$ satisfying
$\lim_{t\to 0}\Theta(t)=0$ with the following property.
Let $\delta\in(0,\delta_0]$. 
Let $f,g\in L^2(\reals^d)$ have positive norms, and assume that $f$ is nonnegative.
Suppose that $\norm{\proj(f\otimes g)}\ge (1-\delta)\norm{f}\norm{g}$.
Then there exists $\rho\in\reals^+$ such that the functions $f^*(x)=\rho^{d/2}f(\rho x)$ 
and $g^*(x)=\rho^{d/2}g(\rho x)$,
and the Fourier transforms of $f^*,g^*$, are $(\delta,\Theta)$--normalized.
\end{proposition}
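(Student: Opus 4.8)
The plan is to combine the three localization statements already established --- Lemma~\ref{lemma:spatiallocalization}, Corollary~\ref{cor:fourierlocalization}, and Lemma~\ref{lemma:zerofouriermode} --- into a single simultaneous normalization, the main new point being that one dilation parameter $\rho$ can be chosen so that $f^*,g^*$ \emph{and} their Fourier transforms are all $(\delta,\Theta)$--normalized for a common auxiliary function $\Theta$. First I would invoke Lemma~\ref{lemma:spatiallocalization} to produce a dilation parameter $\rho_1$ and an auxiliary function $\Theta_1$ so that $f^{(1)}(x)=\rho_1^{d/2}f(\rho_1 x)$ and $g^{(1)}(x)=\rho_1^{d/2}g(\rho_1 x)$ satisfy the four spatial-localization inequalities with $\Theta_1$. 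By the dilation-invariance of the hypothesis $\norm{\proj(f\otimes g)}\ge(1-\delta)\norm{f}\norm{g}$, I may replace $f,g$ by $f^{(1)},g^{(1)}$ from the outset, i.e.\ assume $\rho_1=1$. Applying Corollary~\ref{cor:fourierlocalization} to this rescaled pair yields a second dilation parameter $\rho_2$ and (enlarging $\Theta$ if necessary) an auxiliary function $\Theta_2$ so that $\widehat{f^{(2)}}$ and $\widehat{g^{(2)}}$ satisfy the four inequalities, where $f^{(2)}(x)=\rho_2^{d/2}f^{(1)}(\rho_2 x)$; equivalently, $\widehat{f^{(1)}}$ after the dilation $\xi\mapsto\rho_2^{-d/2}\widehat{f^{(1)}}(\rho_2^{-1}\xi)$ satisfies them.

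The crux is then to show that $\rho_2$ is pinned between two positive finite constants depending only on $d$ and $\Theta$, so that applying \emph{no further} dilation --- or a bounded one --- keeps all four families of inequalities valid simultaneously (with a common $\Theta$ that absorbs the bounded distortion). This is exactly what Lemma~\ref{lemma:zerofouriermode} is designed to deliver, using the nonnegativity of $f$: inequality~\eqref{eq:fourierbound1} forces a definite fraction of the $L^2$ mass of $\widehat{f^{(1)}}$ to live in a fixed ball $\{|\xi|\le C\}$, while~\eqref{eq:fourierbound2} forces a definite fraction to live outside $\{|\xi|\le C^{-1}\}$. If $\rho_2$ were very large, the rescaled Fourier transform would have to satisfy the spatial-localization conclusion~\eqref{eq:thirdconclusion} of Lemma~\ref{lemma:spatiallocalization} at a scale that contradicts~\eqref{eq:fourierbound1}; if $\rho_2$ were very small, it would contradict~\eqref{eq:fourierbound2}. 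Quantifying this, $c(d,\Theta)\le\rho_2\le C(d,\Theta)$. The lower bound on $\rho_1\rho_2$ coming from the uncertainty principle is automatic, but it is the two-sided bound on $\rho_2$ alone --- granted that we have already normalized $\rho_1=1$ --- that we need. I would take $\rho$ in the statement to be $\rho_1\rho_2$ (so that $f^*(x)=\rho^{d/2}f(\rho x)$), observe that $f^*=f^{(2)}$, $g^*=g^{(2)}$, and note that since $f^{(1)},g^{(1)}$ satisfy the spatial inequalities with $\Theta_1$ and $\rho_2\in[c,C]$, the functions $f^{(2)},g^{(2)}$ satisfy them with an enlarged $\Theta$ (a dilation by a factor in a fixed compact range only worsens each integral by a bounded multiplicative and additive amount, which can be absorbed into a larger continuous $\Theta$ still tending to $0$).

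Finally I would set $\delta_0$ to be the minimum of the thresholds required by Lemma~\ref{lemma:spatiallocalization}, Corollary~\ref{cor:fourierlocalization}, and Lemma~\ref{lemma:zerofouriermode}, and let $\Theta$ be a single continuous majorant dominating $\Theta_1$, $\Theta_2$, and their bounded-dilation distortions, chosen so that $\lim_{t\to0}\Theta(t)=0$; then all eight inequalities (four spatial for $f^*,g^*$, four spatial for $\widehat{f^*},\widehat{g^*}$) hold with this $\Theta$, which is the assertion that $f^*,g^*,\widehat{f^*},\widehat{g^*}$ are $(\delta,\Theta)$--normalized. I expect the main obstacle to be bookkeeping rather than conceptual: carefully checking that Lemma~\ref{lemma:zerofouriermode} applies to $f^{(1)}$ (it is nonnegative, has positive norm, and satisfies the conclusions of Lemma~\ref{lemma:spatiallocalization} with $\rho=1$ once we have normalized), and then verifying that a dilation of $f^{(1)}$ or $\widehat{f^{(1)}}$ by a factor lying in a fixed compact subset of $(0,\infty)$ only degrades the localization bounds in a controlled way. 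One mild subtlety worth stating explicitly: the lower bound~\eqref{eq:fourierbound1} in Lemma~\ref{lemma:zerofouriermode} does \emph{not} require nonnegativity of $g$, so the argument for $g^*$ runs via the boundedness (not positivity) arguments already in place together with $\norm{\proj(f^*\otimes g^*)}\ge(1-\delta)\norm{f^*}\norm{g^*}$; it is only $f$ whose nonnegativity is consumed, which matches the hypothesis.
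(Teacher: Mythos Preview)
Your proposal is correct and follows essentially the same route as the paper: normalize via Lemma~\ref{lemma:spatiallocalization} so that $\rho_1=1$, invoke Corollary~\ref{cor:fourierlocalization} to get a Fourier-side scale $\rho_2$, and then use Lemma~\ref{lemma:zerofouriermode} (applied to the nonnegative $f$) to pin $\rho_2$ in a fixed compact interval, absorbing the bounded dilation into a larger $\Theta$. Your explicit remark that only $f$'s nonnegativity is consumed, with $g$ inheriting the same scale because Lemma~\ref{lemma:spatiallocalization} and Corollary~\ref{cor:fourierlocalization} localize $f,g$ (respectively $\widehat f,\widehat g$) at a \emph{common} scale, matches the paper's reasoning exactly.
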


\begin{proof}
Lemma~\ref{lemma:zerofouriermode} forces the parameter $\rho$ in Corollary~\ref{cor:fourierlocalization}
to be comparable to $1$ if $f$ is $\delta$--normalized for sufficiently small $\delta$.
\end{proof}

The reasoning did not require an assumption that both functions $f,g$ were nonnegative, 
because Lemma~\ref{lemma:spatiallocalization} says that $f,g$ are localized at a common
scale, and likewise $\widehat{f},\widehat{g}$ are localized at a (second) common scale. 
Therefore once $f,\widehat{f}$ are shown to be localized at a pair of scales $\rho,\rho'$
satisfying $\rho\rho'\asymp 1$, the same follows for $g$. 

\begin{corollary} \label{cor:close1}
Let $d\ge 1$.
For every $\eps>0$ there exists $\delta>0$ with the following property. 
If $0\ne f\in L^2(\reals^d)$ is nonnegative, if $0\ne g\in L^2(\reals^d)$, and 
if $\norm{\proj(f\otimes g)} \ge (1-\delta)\norm{f}\norm{g}$ then
\begin{equation} \dist((f,g),\gcross)<\eps \norm{(f,g)}.  \end{equation}
\end{corollary}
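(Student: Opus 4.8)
The plan is to prove Corollary~\ref{cor:close1} by a compactness and contradiction argument, building on Proposition~\ref{prop:precompactness}. Suppose the conclusion fails. Then there exist $\eps>0$ and sequences $f_n,g_n\in L^2(\reals^d)$ with $f_n\ge 0$, $\norm{f_n}=\norm{g_n}=1$ (normalize by homogeneity), and $\norm{\proj(f_n\otimes g_n)}\ge 1-\delta_n$ with $\delta_n\to 0$, yet $\dist((f_n,g_n),\gcross)\ge \eps\sqrt 2$ for all $n$. By Proposition~\ref{prop:precompactness}, after replacing $f_n,g_n$ by appropriate dilates $f_n^*,g_n^*$ (which changes neither the hypothesis nor the distance to $\gcross$, since $\gcross$ is dilation-invariant and the relevant quantities are dilation-invariant), we may assume $f_n,g_n$ and $\widehat{f_n},\widehat{g_n}$ are all $(\delta_n,\Theta)$--normalized for a single fixed auxiliary function $\Theta$.

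The next step is to extract a limit. The uniform localization estimates of Lemma~\ref{lemma:spatiallocalization}, simultaneously in physical space for $f_n,g_n$ and in frequency space for $\widehat{f_n},\widehat{g_n}$, give tightness of $\{f_n\}$ and $\{g_n\}$ as bounded families in $L^2$: no mass escapes to spatial infinity, to the spatial origin, to high frequency, or to low frequency, and the level-set truncations are uniformly controlled. By a standard argument (passing to a subsequence, using weak $L^2$ compactness together with the four localizations to upgrade weak convergence to norm convergence — the mass is trapped in a region where $L^2$ embeds compactly modulo the truncation of large values, and the large-value truncation is itself controlled), one obtains $f_n\to f$ and $g_n\to g$ in $L^2(\reals^d)$ along a subsequence, with $\norm{f}=\norm{g}=1$ and $f\ge 0$. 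Since $\proj$ is a bounded operator and $(f,g)\mapsto f\otimes g$ is continuous from $L^2\times L^2$ to $L^2$, we get $\norm{\proj(f\otimes g)}=\lim_n\norm{\proj(f_n\otimes g_n)}\ge \lim_n(1-\delta_n)=1=\norm{f}\norm{g}$. By the equality case recalled in Section~1 (equality in $\norm{\proj(f\otimes g)}\le\norm{f}\norm{g}$ forces $f$ to be a complex radial Gaussian and $g$ a scalar multiple of $f$), we conclude $(f,g)\in\gcross$. But then $\dist((f_n,g_n),\gcross)\le \norm{(f_n,g_n)-(f,g)}\to 0$, contradicting $\dist((f_n,g_n),\gcross)\ge\eps\sqrt2$.

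The main obstacle is the compactness extraction itself: weak $L^2$ limits do not a priori preserve norm, and one must genuinely use all four conclusions of Lemma~\ref{lemma:spatiallocalization} for both $f_n$ and $\widehat{f_n}$ (and similarly $g_n$) to rule out the three standard loss-of-mass mechanisms (concentration, oscillation/frequency spreading, and spatial spreading) and thereby force strong convergence. Concretely, the joint spatial and frequency localization confines each $f_n$ to a fixed compact set in phase space up to arbitrarily small $L^2$ error; on such a set, truncating the large values of $f_n$ (controlled by the first conclusion) yields a family in a fixed $L^2$ ball of functions supported in a fixed bounded region and bounded pointwise, hence precompact in $L^2$ by Rellich--Kondrachov after a further mollification argument — or more simply, precompact directly since the frequency cutoff makes the functions uniformly equicontinuous. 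One must also check that the limit $f$ inherits $f\ge 0$ (immediate from $L^2$ convergence along a subsequence converging a.e.) and that it is nonzero (guaranteed by $\norm{f}=1$, which follows from strong convergence). A minor point worth stating carefully is the passage from $\dist((f,g),\gcross)^2=\norm{f-F}^2+\norm{g-cF}^2$ with the normalization $\norm{(f,g)}^2=2$: choosing $\delta$ small enough that the contradiction hypothesis reads $\dist\ge\eps\sqrt2=\eps\norm{(f,g)}$ matches the stated conclusion $\dist((f,g),\gcross)<\eps\norm{(f,g)}$.
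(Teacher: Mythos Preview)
Your argument is correct and follows essentially the same route as the paper: argue by contradiction, normalize by dilation via Proposition~\ref{prop:precompactness} so that both $f_n,g_n$ and their Fourier transforms are $(\delta_n,\Theta)$--normalized, invoke Rellich-type compactness to extract an $L^2$-convergent subsequence, and conclude by the equality case that the limit lies in $\gcross$. The paper compresses your compactness discussion into a one-line appeal to Rellich's Lemma, but the content is the same.
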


\begin{proof}
Suppose the contrary. Then there exists a sequence of pairs $(f_n,g_n)$ of functions 
in $L^2(\reals^d)$ satisfying 
$\norm{f_n}\equiv\norm{g_n}\equiv 1$, $\norm{\proj(f_n\otimes g_n)}\to 1$,
$f_n$ is nonnegative,
and the distance $\dist((f_n,g_n),\gcross)$
from $(f_n,g_n)$ to the set $\gcross$ of all $(F,cF)$ with $F\in\frakG$ and $0\ne c\in\complex$
is bounded below by a positive quantity independent of $n$.

By Proposition~\ref{prop:precompactness} there exist sequences of numbers $\rho_n,\delta_n\in\reals^+$
and an auxiliary function $\Theta$ satisfying $\lim_{t\to 0^+} \Theta(t)=0$
such that $\lim_{n\to\infty}\delta_n=0$
and the sequences of functions $f_n^*(x) = \rho_n^{d/2}f_n(\rho_n x)$
and $g_n^*(x) = \rho_n^{d/2}g_n(\rho_n x)$ are $(\delta_n,\Theta)$--normalized. 
Moreover, the Fourier transforms $\widehat{f_n^*},\widehat{g_n^*}$ are also $(\delta_n,\Theta)$--normalized.
By Rellich's Lemma, the sequences $(f_n^*: n\in\naturals)$ and $(g_n^*: n\in\naturals)$ 
are each precompact in $L^2(\reals^d)$.
Therefore there exists an increasing sequence of natural numbers $n_k$
such that the subsequences $f_{n_k}^*, g_{n_k}^*$ converge in $L^2$ norm to 
limits $f_\infty,g_\infty\in L^2(\reals^d)$, respectively.

Now $\norm{f_\infty}=\lim_{n\to\infty} \norm{f_n^*}=\lim_{n\to\infty} \norm{f_n}=1$.
Moreover, since $\proj:L^2(\reals^{2d})\to L^2(\reals^{2d})$ is a bounded linear operator,
\begin{align*}
\norm{\proj(f_\infty \otimes g_\infty)} 
= \lim_{n\to\infty} \norm{\proj(f_n^*\otimes g_n^*)}
= \lim_{n\to\infty} \norm{\proj(f_n\otimes g_n)}
=1.
\end{align*}
Therefore $(f_\infty,g_\infty)\in\gcross$. In particular, $f_\infty,g_\infty$ are radial complex Gaussians.
This contradicts the assumption that the distance from $f_n$ to $\frakG$ does not tend to zero.
\end{proof}

Since the functions $f_n$ are nonnegative, $f_\infty$ is necessarily close
in norn to a {\em positive} Gaussian function in this argument. Therefore the 
conclusion can be refined: There exists a positive Gaussian $F$
such that $\norm{f-F}\le \eps\norm{f}$. \qed

\begin{lemma} \label{lemma:abs}
For any functions $f,g\in L^2(\reals^d)$,
\begin{equation} \norm{\proj(|f|\otimes|g|)}\ge \norm{\proj(f\otimes g)}. \end{equation}
\end{lemma}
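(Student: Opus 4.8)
The plan is to reduce the claimed inequality to a pointwise estimate on the kernel representation of $\proj$, exploiting that $\proj(f\otimes g)(z)$ is an average of $f(x)g(y)$ over a rotation-invariant probability measure on a sphere. Recall from the notation section that for $0\ne z\in\reals^d\times\reals^d$ one has $\proj(f\otimes g)(z) = \iint f(x)g(y)\,d\sigma_{|z|}(x,y)$, where $\sigma_r$ is a probability measure. Hence, by the triangle inequality for integrals, $|\proj(f\otimes g)(z)| \le \iint |f(x)|\,|g(y)|\,d\sigma_{|z|}(x,y) = \proj(|f|\otimes|g|)(z)$ for every such $z$. In other words, the two functions $\proj(f\otimes g)$ and $\proj(|f|\otimes|g|)$ on $\reals^{d+d}$ satisfy the pointwise domination $|\proj(f\otimes g)| \le \proj(|f|\otimes|g|)$ almost everywhere (the null set $\{z=0\}$ is harmless).

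From this pointwise domination the norm inequality is immediate: taking $L^2(\reals^{d+d})$ norms of both sides preserves the inequality, so
\[
\norm{\proj(f\otimes g)} \le \norm{\,|\proj(f\otimes g)|\,} \le \norm{\proj(|f|\otimes|g|)},
\]
which is exactly the asserted conclusion. Note that $|f|,|g|\in L^2(\reals^d)$ whenever $f,g\in L^2(\reals^d)$, so the right-hand side is well defined, and $\proj(|f|\otimes|g|)$ is a genuine radial $L^2$ function since $|f|\otimes|g| \in L^2(\reals^{d+d})$.

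The only point requiring a word of care — and the closest thing to an obstacle, though it is a mild one — is the justification of the integral representation of $\proj(f\otimes g)$ pointwise, i.e.\ that the a.e.-defined function $z\mapsto \iint f(x)g(y)\,d\sigma_{|z|}(x,y)$ genuinely represents the orthogonal projection. This is already asserted in the notation section of the paper (the displayed formula for $\proj(f\otimes g)(z)$), so I would simply invoke it; if one wanted to be self-contained one checks it first for $f,g$ in a dense class (say continuous compactly supported functions), where the spherical average is classical, and then passes to the limit in $L^2$, using that $\sigma_r$-averaging against an $L^2$ product function is controlled by the bounds of Section~3 (e.g.\ Lemma~\ref{lemma:sigma1}) on a set of $r$ of full measure. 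With that representation in hand, the proof is the two-line argument above: pointwise triangle inequality followed by monotonicity of the $L^2$ norm under pointwise domination of absolute values.
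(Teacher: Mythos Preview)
Your proof is correct, but it follows a different route from the paper's. The paper argues variationally: since $|f|\otimes|g|=|f\otimes g|$ and $|h|$ is radial whenever $h$ is, the pointwise inequality $\big||f\otimes g|-|h|\big|\le |f\otimes g-h|$ shows that the $L^2$ distance from $|f|\otimes|g|$ to the radial subspace is no larger than that from $f\otimes g$; combined with $\norm{|f|\otimes|g|}=\norm{f\otimes g}$ and Pythagoras, this yields the claim. Your argument instead invokes the spherical-average formula for $\proj$ and obtains the stronger pointwise bound $|\proj(f\otimes g)(z)|\le \proj(|f|\otimes|g|)(z)$ directly via the triangle inequality for integrals. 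The paper's approach is more self-contained (it uses nothing about $\proj$ beyond its being orthogonal projection onto a modulus-closed subspace), while yours gives a sharper pointwise conclusion at the cost of relying on the integral representation, which---as you note---strictly speaking requires a density argument to make rigorous for general $L^2$ functions.
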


\begin{proof}
If $h\in L^2(\reals^d\times\reals^d)$ is radial then so is $|h|$.
\[ \norm{\,|f|\otimes |g|-|h|\,} = \norm{\,\big|f\otimes g\big|-|h|\,} \le  \norm{f\otimes g-h}.\]
\end{proof}

The next result is identical to Corollary~\ref{cor:close1}, except that the restriction
to nonnegative functions is removed.
\begin{corollary} \label{cor:close2}
Let $d\ge 1$.
For every $\eps>0$ there exists $\delta>0$ with the following property. 
If $0\ne f,g\in L^2(\reals^d)$ satisfy $\norm{\proj(f\otimes g)} \ge (1-\delta)\norm{f}\norm{g}$
then there exists a  
radial complex Gaussian $G$ such that $\norm{f-G}\le\eps\norm{f}$
and $\norm{g-cG}\le\eps\norm{g}$,
where $c=\norm{g}/\norm{f}$.
\end{corollary}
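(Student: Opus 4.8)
The plan is to reduce Corollary~\ref{cor:close2} to Corollary~\ref{cor:close1} by passing to absolute values, using Lemma~\ref{lemma:abs} and the observation that the conclusion of Corollary~\ref{cor:close1} can be upgraded to produce a positive Gaussian. After normalizing so that $\norm f=\norm g=1$, suppose $\norm{\proj(f\otimes g)}\ge(1-\delta)$. By Lemma~\ref{lemma:abs}, $\norm{\proj(|f|\otimes|g|)}\ge\norm{\proj(f\otimes g)}\ge(1-\delta)$, and $|f|$ is nonnegative with $\norm{|f|}=\norm{|g|}=1$. So Corollary~\ref{cor:close1} applies to the pair $(|f|,|g|)$: for $\delta$ small depending on a parameter $\eps'$ to be fixed later, there is a Gaussian $F\in\frakG$, which by the refinement noted after the proof of Corollary~\ref{cor:close1} may be taken \emph{positive}, with $\norm{|f|-F}\le\eps'$ and $\norm{|g|-cF}\le\eps'$ where $c=\norm{|g|}/\norm{|f|}=1$.

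The next step is to argue that $f$ itself is close to a unimodular multiple of $F$; that is, that the phase of $f$ is essentially constant. The point is that $f\otimes g$ is close in $L^2(\reals^{2d})$ to a radial (hence real after multiplying by a unimodular constant) function $h=\proj(f\otimes g)$, with $\norm{h}\ge 1-\delta$, so $\norm{f\otimes g-h}^2=\norm{f\otimes g}^2-\norm{h}^2\le 2\delta$. On the other hand $|f\otimes g|=|f|\otimes|g|$ is close to the positive radial function $F\otimes F$ (distance $O(\eps')$ by the triangle inequality applied to the tensor decomposition). Writing $f(x)=|f(x)|e^{i\alpha(x)}$ and $g(y)=|g(y)|e^{i\beta(y)}$, closeness of $f\otimes g$ to a function $h$ whose modulus is close to $|f|\otimes|g|$ forces $e^{i(\alpha(x)+\beta(y))}$ to be close to the phase of $h(x,y)$ on the bulk of the mass; since $|f|\otimes|g|$ is close to the strictly positive $F\otimes F$, the mass of $|f|\otimes|g|$ is not concentrated near zero, and a standard estimate (of the form $\big|1-e^{i\theta}\big|^2\le C$ times the $L^2$ defect weighted by the modulus) gives that $\alpha(x)+\beta(y)$ is, after subtracting a constant, small in an $L^2$-with-weight sense. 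Because the variables separate, $\alpha(x)$ and $\beta(y)$ must each be close to constants $\alpha_0,\beta_0$ on the bulk; replacing $F$ by $e^{i\alpha_0}F$ and $cF$ by $e^{i\beta_0}F$ (note $|e^{i\alpha_0}|=|e^{i\beta_0}|=1$, so these are still in $\frakG$ and $\gcross$ respectively) we conclude $\norm{f-G}\le\eps$ and $\norm{g-cG}\le\eps$ for $G=e^{i\alpha_0}F$, provided $\eps'$ and $\delta$ were chosen small enough in terms of $\eps$.

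I expect the main obstacle to be the phase-recovery step: making rigorous the claim that closeness of $f\otimes g$ to a radial function, combined with $|f|\otimes|g|$ being close to a strictly positive Gaussian, forces the phases $\alpha(x)$ and $\beta(y)$ to be individually nearly constant. The key quantitative input is that $F\otimes F$ is bounded below by a fixed positive amount on any fixed compact set, so on that set $|f(x)g(y)|$ is bounded below except on a set of small measure; there $\big|f(x)g(y)-h(x,y)\big|$ small forces $\arg(f(x)g(y))$ close to $\arg(h(x,y))$. One then uses that $h$ is radial: on the region $|x|\approx r_1$, $|y|\approx r_2$ the phase of $h$ depends only on $(r_1,r_2)$ (indeed only on $r_1^2+r_2^2$), and comparing the slices $y$ versus $-y$, or varying $y$ within a sphere while fixing $x$, shows $\beta(y)$ is constant on spheres and then across spheres, and symmetrically for $\alpha$. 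The separation-of-variables bookkeeping is routine once the pointwise phase comparison is in hand; everything else is an assembly of Lemma~\ref{lemma:abs}, Corollary~\ref{cor:close1}, and the elementary inequality $\norm{f\otimes g-\proj(f\otimes g)}^2=1-\norm{\proj(f\otimes g)}^2$.
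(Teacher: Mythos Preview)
Your reduction to Corollary~\ref{cor:close1} via Lemma~\ref{lemma:abs} is exactly right, and matches the paper. The gap is in the phase-recovery step, and it is a real one.

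You assert that because $e^{i(\alpha(x)+\beta(y))}$ is close to the phase of the radial function $h$, the phases $\alpha$ and $\beta$ must each be close to \emph{constants}. This is false. Take $f(x)=e^{it|x|^2}F(x)$ and $g(y)=e^{it|y|^2}F(y)$ for any $t\in\reals$. Then $|f|=|g|=F$, while $f\otimes g = e^{it(|x|^2+|y|^2)}F\otimes F$ is exactly radial, so $\norm{\proj(f\otimes g)}=1$. Here $\alpha(x)=t|x|^2$ is not close to any constant when $t$ is large, yet the hypotheses are satisfied with $\delta=0$. The modulations $f\mapsto e^{it|x|^2}f$ are a noncompact symmetry group of the inequality that your argument overlooks. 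Your sphere-comparison heuristic (``varying $y$ within a sphere while fixing $x$'') correctly shows $\alpha,\beta$ are approximately \emph{radial}; the unjustified leap is ``and then across spheres.''

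What actually holds is the approximate functional equation $\alpha(x)+\beta(y)\approx\xi(|x|^2+|y|^2)$ for some measurable $\xi$. Passing to variables $s=|x|^2$, $t=|y|^2$ this becomes an approximate Cauchy equation $\tilde\varphi(s)+\tilde\psi(t)\approx\tilde\xi(s+t)$, whose approximate solutions are affine, not constant. The paper invokes precisely such a result (Proposition~8.2 of \cite{christyoungest}) to conclude that $\alpha(x)$ is close to $L(|x|^2)$ for a real affine $L$, so that $f$ is close to the \emph{complex} Gaussian $e^{iL(|x|^2)}F(x)$; a further short argument shows the linear parts of the affine functions for $f$ and $g$ nearly agree, so $g$ is close to a scalar multiple of the same Gaussian. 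Your proposed $G=e^{i\alpha_0}F$ is too restrictive: the correct $G$ must in general carry a nontrivial imaginary quadratic exponent.
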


\begin{proof}
Let the pair $(f,g)$ satisfy the hypotheses for some small $\delta>0$, and assume without loss of generality that
$\norm{f}=\norm{g}=1$. By Lemma~\ref{lemma:abs}, the pair $(|f|,|g|)$ satisfies the hypotheses,
with the same parameter $\delta$.
Corollary~\ref{cor:close1} guarantees that there exists a positive Gaussian
function $F$ 
such that $\norm{(|f|,|g|) - (F,F)}$ is small.
By exploiting dilations we may reduce to the case in which $F(x)=e^{-\pi|x|^2/2}$.

Express $f=e^{i\varphi}|f|$ and $g=e^{i\psi}|g|$ where $\varphi,\psi$ are Lebesgue measurable real-valued functions.
Set $\tilde f = e^{i\varphi}F$ and $\tilde g = e^{i\psi}F$.
Then $\norm{(f,g)-(\tilde f,\tilde g)}$ is small, so 
$\norm{\proj(\tilde f\otimes\tilde g)}$ is nearly equal to $\norm{f}\norm{g}$
and hence nearly equal to $\norm{\tilde f}\norm{\tilde g}$.

Let $\eps>0,\delta>0$.
Choose $R\ge 1$ sufficiently large that $\iint_{|(x,y)|>R/2} e^{-\pi(|x|^2+|y|)^2}\,dx\,dy<\eps$.
Suppose that $\norm{\proj(f\otimes g)}\ge (1-\delta)\norm{f}\norm{g}$
and $\norm{(|f|,|g|)-(F,F)}<\delta$.
If $\delta$ is sufficiently small then there exists a function $h$ such that
\[ \iint_{|(x,y)|\le 2R} \big| e^{i[\varphi(x)+\psi(y)]} e^{-\pi(|x|^2+|y|^2)/2}-h(|(x,y)|)\big|^2\,dx\,dy <e^{-2\pi R^2-R}  \eps.\]
The same holds with any $R$--dependent constant factor; this factor is chosen for the sake of convenience below.
The same bound follows with $h(t)= e^{i\xi(t)}|h(t)|$ 
replaced by $e^{i\xi(t)} e^{-\pi |t|^2/2}$, with $\eps$ replaced by $2\eps$,
for some real-valued measurable function $\xi$.

By Chebyshev's inequality, 
\begin{equation}
\big| \{z=(x,y): |z|\le 2R \text{ and } |e^{i[\varphi(x)+\psi(y)-\xi(|z|)]}-1 |\ge \eps^{1/4} \} \big| 
\le e^{-R} \eps^{1/2}, \end{equation}
where $|\cdot|$ denotes Lebesgue measure.
By choosing a typical value of $y$ one concludes that there exists a real-valued measurable function $\tilde\varphi$
defined on $\reals^+$ such that
\begin{equation}
\big| \{x\in\reals^d: |x|\le R \text{ and } |e^{i[\varphi(x)}-e^{i\tilde\varphi(|x|^2)]} |
\ge \eps^{1/4} \} \big| \le C e^{-R} \eps^{1/2}.  \end{equation}
Indeed, this holds with 
\[\tilde\varphi(|x|^2) = {\xi((|x|^2+|y|^2)^{1/2})}-\psi(y) \]
for any typical value of $y$ since 
$\big| e^{i\varphi(x)}-e^{i[{\xi(|x|^2+|y|^2)}-\psi(y)]}\big|$ is small for nearly all $x$ for typical $y$.
By the same reasoning, $e^{i\psi(y)}$ is nearly equal in the same sense to $e^{i\tilde\psi(|y|^2)}$
for some real-valued measurable function $\tilde\psi$.

Thus for any $\eta>0$,
\begin{multline}
\big|\{(s,t)\in\reals^+\times\reals^+: |(s,t))|\le R^2 \text{ and } 
|e^{i[\tilde\varphi(s)+\tilde\psi(t)-\xi(\sqrt{s+t})]}-1 | \ge 2\eps^{1/4} \} \big| 
\\ \le C\eta^{2d} +   C \eta^{-(2d-1)} e^{-R} \eps^{1/2}.  \end{multline}
Here $|\cdot|$ denotes Lebesgue measure on $\reals\times\reals$, restricted to the quadrant $\reals^+\times\reals^+$.
Choosing $\eta$ to be an appropriate power of $e^{-R}\eps$ yields
an upper bound $Ce^{-cR}\eps^c$ for some $c,C\in\reals^+$.

Proposition~8.2 of \cite{christyoungest} is concerned with ordered triples of
functions $(\tilde\varphi,\tilde\psi,\tilde\xi)$ for which 
$\tilde\varphi(s)+\tilde\psi(t)-\tilde\xi(s+t)$ is nearly zero for nearly all ordered
pairs $(s,t)$ in an interval.  
By applying this proposition with $\tilde\xi(t)=\xi(t^{1/2})$ we conclude that
there exists an affine function $L$ such that
\begin{equation}
\big| \{s\in [0,R^2/4]: |e^{i\tilde\varphi(s)}-e^{iL(s)} |\ge C\eps^{1/4} \} \big| \le C e^{-cR}\eps^{c}.
\end{equation}

Replacing $L$ by its real part does not worsen the approximation since $\tilde\varphi$ is real-valued
and hence $e^{i\tilde\varphi}$ is unimodular, so we may assume that $L$ is real-valued.
The favorable factor $e^{-cR}$ on the right-hand side makes it possible to overcome the power $r^{d-1}$
that appears in the polar coordinate expression for Lebesgue measure in $\reals^d$ to conclude that
\begin{equation}
\big| \{x\in\reals^d: |x|\le \tfrac12 R \text{ and } 
|e^{i\varphi(x)}-e^{iL(|x|^2)} |\ge C\eps^{1/4} \} \big| \le C \eps^{c}.
\end{equation}
Thus $f$ is nearly equal to the Gaussian function $G(x)=e^{-\pi|x|^2/2} e^{iL(|x|^2)}$.
The same reasoning applies to $g$, which is consequently nearly equal to a Gaussian
function $\tilde G(x)=e^{-\pi|x|^2/2} e^{i \tilde L(|x|^2)}$,
where $\tilde L$ is another real-valued affine function.

Now $\norm{\proj(G\otimes\tilde G)}$ is nearly equal to $\norm{G}\norm{\tilde G}$
since $(G,\tilde G)$ is nearly equal to $(f,g)$.
Thus \begin{equation} e^{i [L(|x|^2)+\tilde L(|y|^2)]} \approx e^{i\xi(|x|^2+|y|^2)}, \end{equation}
where $\approx$ denotes approximate equality in weighted $L^2$ norm with weight $e^{-\pi(|x|^2+|y|^2)}$.
Express $L(|x|^2)=\alpha'|x|^2+\beta'$, $\tilde L(|y|^2)=\alpha''|y|^2+\tilde\beta''$, 
and $\xi(|z|^2) = \alpha|z|^2+\beta$.
By choosing a typical value of $y$ and regarding both sides as functions
of $x$ we conclude that $\alpha'$ is approximately equal to $\alpha$.
Reversing the roles of the variables proves that $\alpha''$ is also approximately
equal to $\alpha$, whence $\alpha',\alpha''$ are approximately equal.
\end{proof}
 

This completes the proof of Proposition~\ref{prop:nonquant}.

\begin{remark}
Young's convolution inequality  and the Hausdorff-Young inequality are strongly bound up with additive structure,
and the analyses of near extremizers of each of these inequalities
\cite{christyoungest},\cite{christHY} relied on information from additive
combinatorics. Additive structure apparently plays a less central role in the present work,
but is the basis for the proof of Corollary~\ref{cor:close2}. 
\end{remark}

\section{Spectral analysis} \label{section:spectral}

Define
\[ F(x)=e^{-\pi|x|^2/2}\] for $x\in\reals^d$.
This function satisfies $\norm{F}=1$.

Define a bounded linear operator $T:L^2(\reals^d)\to L^2(\reals^d)$ by
\begin{equation}
Tf(x) = \int_{\reals^d} F(y)\, \proj(f\otimes F)(x,y)\,dy.
\end{equation}
This operator is related to the projection $\proj$ by the identity
\begin{equation} \label{eq:Trelation2} 
\langle \proj(f\otimes F),\, \proj(g\otimes F)\rangle = \langle Tf,g\rangle.  \end{equation}
Indeed,
\begin{multline*}
\langle \proj(f\otimes F),\, \proj(g\otimes F)\rangle 
= \langle \proj(f\otimes F),\,g\otimes F\rangle
\\ = \iint \proj(f\otimes F)(x,y) \overline{g}(x)F(y)\,dx\,dy 
= \int Tf(x)\,\overline{g}(x)\,dx = \langle Tf,g\rangle.
\end{multline*}
For any $f,g\in\lt(\reals^d)$, $\proj(f\otimes g)\equiv \proj(g\otimes f)$.
Since 
$\langle \proj(f\otimes F),\, \proj(g\otimes F)\rangle $
is the complex conjugate of`
$ \langle \proj(g\otimes F),\, \proj(f\otimes F)\rangle$,
it follows from \eqref{eq:Trelation2} that $T$ is self-adjoint.


Define $\scriptr\subset L^2(\reals^d)$ to be the subspace consisting of
all radial functions, which is the closure of the span of all functions $|x|^{2m}e^{-\pi|x|^2/2}$,
where $m\in\{0,1,2,\dots\}$. 
The range of $T$ is contained in $\scriptr$.
Indeed, if $g\in\lt(\reals^d)$ is orthogonal to all radial
functions  then $g\otimes F$ is orthogonal to all radial functions in $\lt(\reals^{d+d})$,
so \[\langle Tf,g\rangle = \langle \proj(f\otimes F),\proj(g\otimes F)\rangle
= \langle \proj(f\otimes F),0 \rangle=0\]
for all $f\in\lt(\reals^d)$.
Since $T$ is self-adjoint, $T$ vanishes identically on $\scriptr^\perp$, as well. 

We require an understanding of the eigenvalues and eigenvectors of $T$.
The relevant information is contained in the next result,
together with the fact that $T\equiv 0$ on $\scriptr$.

\begin{proposition}\label{prop:ONbasis}
There exists an orthonormal basis for $\scriptr$ consisting of eigenfunctions of $T$
of the form
\begin{equation} \set{\psi_m(x)=q_m(|x|^2)e^{-\pi|x|^2/2}: m=0,1,2,\dots} \end{equation}
where $q_m$ is a polynomial of degree exactly $m$. 
The corresponding eigenvalues are
\begin{equation} \label{eigenvalues} \lambda_{d,m} = 
\frac{  \Gamma(m+\tfrac12 d) } { \Gamma(m+d) } \cdot \frac{\Gamma(d)}{\Gamma(\tfrac12 d)}
\ \text{ for $m=0,1,2,\dots$.}  \end{equation}
\end{proposition}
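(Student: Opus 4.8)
The plan is to compute the action of $T$ explicitly on the dense subspace $\scriptr_0\subset\scriptr$ spanned by the functions $|x|^{2m}F(x)$, $m=0,1,2,\dots$, to exhibit it there as a triangular operator with respect to the monomial basis ordered by degree, and to read off the eigenfunctions and eigenvalues. First I would evaluate $\proj(f\otimes F)$ when $f(x)=q(|x|^2)F(x)$ for a polynomial $q$. On the sphere $\{(x,y)\in\reals^d\times\reals^d:|x|^2+|y|^2=r^2\}$ the factor $e^{-\pi(|x|^2+|y|^2)/2}=e^{-\pi r^2/2}$ is constant, so
\[\proj(f\otimes F)(z)=e^{-\pi|z|^2/2}\int q(|x|^2)\,d\sigma_{|z|}(x,y).\]
Under the measure $\sigma_r$ the scalar $|x|^2/r^2$ has the $\mathrm{Beta}(\tfrac d2,\tfrac d2)$ distribution (equivalently: if $X,Y$ are independent standard Gaussian vectors in $\reals^d$ then $(X,Y)/|(X,Y)|$ is uniformly distributed on the unit sphere, and $|X|^2/(|X|^2+|Y|^2)\sim\mathrm{Beta}(\tfrac d2,\tfrac d2)$). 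Hence $\proj(f\otimes F)(z)=e^{-\pi|z|^2/2}(\scriptb q)(|z|^2)$, where
\[(\scriptb q)(s)=\frac{\Gamma(d)}{\Gamma(\tfrac d2)^2}\int_0^1 q(st)\,t^{d/2-1}(1-t)^{d/2-1}\,dt,\]
and a Beta-integral evaluation gives $\scriptb(s^m)=\lambda_{d,m}s^m$, so $\scriptb$ is already diagonal in the monomial basis.

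Next I would carry out the remaining integration in the definition of $T$. Since $\proj(f\otimes F)(x,y)$ depends only on $|x|^2+|y|^2$, polar coordinates in $y$ give $Tf(x)=e^{-\pi|x|^2/2}(\scriptn\scriptb q)(|x|^2)$, where
\[(\scriptn\Phi)(s)=\frac{\pi^{d/2}}{\Gamma(\tfrac d2)}\int_0^\infty e^{-\pi w}\,\Phi(s+w)\,w^{d/2-1}\,dw.\]
Expanding $(s+w)^m$ by the binomial theorem and using $\int_0^\infty e^{-\pi w}w^{k+d/2-1}\,dw=\pi^{-(k+d/2)}\Gamma(k+\tfrac d2)$ shows $\scriptn(s^m)=s^m+(\text{a polynomial of degree }<m)$, the leading coefficient being $1$ precisely because $\int_{\reals^d}e^{-\pi|y|^2}\,dy=1$. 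Consequently $T$ maps $\scriptr_0$ into itself, and in the coordinates $q\leftrightarrow q(|x|^2)F(x)$ it acts on polynomials by $s^m\mapsto\lambda_{d,m}s^m+(\text{lower order})$. The eigenvalues $\lambda_{d,m}$ are pairwise distinct, since $\lambda_{d,m+1}/\lambda_{d,m}=(m+\tfrac d2)/(m+d)\in(0,1)$, whence $1=\lambda_{d,0}>\lambda_{d,1}>\cdots>0$. Elementary linear algebra for a triangular operator with distinct diagonal entries then provides, for each $m$, a polynomial $q_m$ of degree exactly $m$, unique up to a scalar multiple and independent of the truncation, with $\scriptn\scriptb q_m=\lambda_{d,m}q_m$; equivalently $\psi_m:=q_m(|x|^2)F(x)$ satisfies $T\psi_m=\lambda_{d,m}\psi_m$.

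It remains to check orthonormality and completeness. Because $T$ is self-adjoint, as established above, and the $\lambda_{d,m}$ are distinct, the eigenfunctions $\psi_m$ are mutually orthogonal; rescale each so that $\norm{\psi_m}=1$. For every $M$ the span of $\psi_0,\dots,\psi_M$ coincides with that of $|x|^0F,\dots,|x|^{2M}F$, since $\psi_m=\sum_{j\le m}c_{mj}|x|^{2j}F$ with $c_{mm}\ne0$; therefore $\Span\{\psi_m:m\ge0\}$ is dense in $\scriptr$, and being orthonormal the family $\{\psi_m\}$ is an orthonormal basis of $\scriptr$. I expect the only real difficulty to be bookkeeping: verifying that the push-forward of $\sigma_r$ under $(x,y)\mapsto|x|^2$ is exactly the stated Beta law, and tracking the normalizing constants so that the eigenvalues emerge in the closed form \eqref{eigenvalues}. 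Once the operator $\scriptn\scriptb$ has been identified, the triangularity argument, and hence the whole proposition, follows immediately.
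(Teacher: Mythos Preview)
Your argument is correct and follows essentially the same strategy as the paper: both proofs show that $T$ acts triangularly on the filtration $\Span\{|x|^{2k}F:k\le m\}$ with leading coefficient $\lambda_{d,m}$, and then extract an orthonormal eigenbasis of the indicated form.

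The differences are presentational rather than substantive. You factor the action on polynomials as $\scriptn\circ\scriptb$ and identify the spherical average as integration against the $\mathrm{Beta}(\tfrac d2,\tfrac d2)$ law, so that the eigenvalue formula \eqref{eigenvalues} drops out immediately from the Beta integral $B(m+\tfrac d2,\tfrac d2)/B(\tfrac d2,\tfrac d2)$. The paper instead computes the leading coefficient $\gamma_{m,d}=\int_{S^{2d-1}}|x|^{2m}\,d\sigma$ by evaluating $\int_{\reals^{2d}}e^{-\pi|z|^2/2}|x|^{2m}\,dz$ two ways. For the eigenbasis, the paper invokes Gram--Schmidt (using self-adjointness and the filtration to see that the resulting orthonormal system consists of eigenfunctions), whereas you first produce eigenfunctions from triangularity with distinct diagonal entries and then deduce orthogonality from self-adjointness and distinctness of the $\lambda_{d,m}$. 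Both routes are standard and equivalent here; your Beta-law identification is a tidy shortcut to the closed form for the eigenvalues.
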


Throughout the analysis we represent elements of $\reals^d\times\reals^d$
as $z=(x,y)$ where $x,y\in\reals^d$.
Let $G(z)=e^{-\pi|z|^2/2}$, so that $G = F\otimes F = \proj(F\otimes F)$.
Denote elements $\alpha\in\set{0,1,2,\dots}^d$ by
$\alpha=(\alpha_1,\dots,\alpha_d)$
and write $|\alpha| = \sum_{j=1}^d \alpha_j$.
$x^\alpha = \prod_{j=1}^d x_j^{\alpha_j}$,
and $x^\alpha F$ indicates the function $x\mapsto x^\alpha F(x)$.

\begin{lemma}
$T(x^\alpha F)=0$
for any $\alpha\in\{0,1,2,\dots\}^d\setminus \{0,2,4,\dots\}^d$.
For any $\alpha\in\{0,2,4,\dots\}^d$,
there exists a polynomial $Q:\reals^{d}\to\complex$ of degree exactly $|\alpha|/2$ such that
\begin{equation} T(x^\alpha F) =Q(|x|^2)F(x). \end{equation}
\end{lemma}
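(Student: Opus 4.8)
The plan is to evaluate $\proj\big((x^\alpha F)\otimes F\big)$ explicitly and then integrate it against $F(y)$ in the $y$ variable, following the definition $Tf(x)=\int_{\reals^d}F(y)\,\proj(f\otimes F)(x,y)\,dy$. Using the spherical-average formula $\proj(f\otimes g)(z)=\iint f(u)g(v)\,d\sigma_{|z|}(u,v)$ together with the fact that $|u|^2+|v|^2=|z|^2$ is constant on the support of $\sigma_{|z|}$, the Gaussian factor pulls out:
\[
\proj\big((x^\alpha F)\otimes F\big)(z)=\iint u^\alpha e^{-\pi(|u|^2+|v|^2)/2}\,d\sigma_{|z|}(u,v)=G(z)\,m_\alpha(|z|),
\qquad m_\alpha(r):=\iint u^\alpha\,d\sigma_r(u,v).
\]
Since $\sigma_r$ is invariant under each coordinate reflection $u_j\mapsto -u_j$ of $\reals^{d}\times\reals^{d}$, and $u^\alpha$ changes sign under this reflection whenever $\alpha_j$ is odd, we get $m_\alpha\equiv 0$ unless every $\alpha_j$ is even. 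In that case $\proj\big((x^\alpha F)\otimes F\big)\equiv 0$, hence $T(x^\alpha F)=0$, which is the first assertion.

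Now assume $\alpha\in\{0,2,4,\dots\}^d$, so $|\alpha|$ is even. Pushing $\sigma_r$ forward under dilation by $r^{-1}$ gives $\sigma_1$, whence $m_\alpha(r)=c_\alpha r^{|\alpha|}$ with $c_\alpha=\iint u^\alpha\,d\sigma_1(u,v)$. Because $u^\alpha=\prod_j u_j^{\alpha_j}\ge 0$ on the unit sphere of $\reals^{2d}$ and is strictly positive on a subset of positive surface measure, $c_\alpha>0$. Writing $|z|^{|\alpha|}=(|x|^2+|y|^2)^{|\alpha|/2}$ and integrating in $y$,
\[
T(x^\alpha F)(x)=\int_{\reals^d}F(y)\,\proj\big((x^\alpha F)\otimes F\big)(x,y)\,dy
=c_\alpha\,e^{-\pi|x|^2/2}\int_{\reals^d}(|x|^2+|y|^2)^{|\alpha|/2}e^{-\pi|y|^2}\,dy .
\]

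Expanding $(|x|^2+|y|^2)^{|\alpha|/2}$ by the binomial theorem — legitimate because $|\alpha|/2$ is a nonnegative integer — and integrating the finitely many terms against the Gaussian, each moment $\int_{\reals^d}|y|^{2j}e^{-\pi|y|^2}\,dy$ being a finite positive constant, yields $T(x^\alpha F)(x)=Q(|x|^2)F(x)$ with $Q$ a polynomial of degree at most $|\alpha|/2$. The coefficient of $s^{|\alpha|/2}$ in $Q(s)$ equals $c_\alpha\int_{\reals^d}e^{-\pi|y|^2}\,dy=c_\alpha\ne 0$, so $\deg Q=|\alpha|/2$ exactly. The computation is essentially routine — every integral converges by Gaussian decay and the interchange of the finite binomial sum with the integral is trivial — and the only step requiring an argument rather than a formula is the strict positivity $c_\alpha>0$, which is precisely what pins the degree of $Q$ at $|\alpha|/2$ rather than something smaller; I anticipate no genuine obstacle.
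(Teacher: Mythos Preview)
Your proof is correct and follows essentially the same approach as the paper: pull out the Gaussian factor from the spherical average to get $\proj((x^\alpha F)\otimes F)(z)=c_\alpha|z|^{|\alpha|}G(z)$, use parity to kill the odd case, use positivity of $u^\alpha$ on the sphere to get $c_\alpha>0$ in the even case, and then integrate against $F(y)$ via the binomial expansion of $(|x|^2+|y|^2)^{|\alpha|/2}$. Your treatment is in fact slightly more explicit than the paper's, which simply asserts that $\proj(x^\alpha G)$ is a scalar multiple of $|z|^{|\alpha|}G(z)$ without writing out the dilation argument.
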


\begin{proof}
$\proj(x^\alpha F\otimes F)$ is the projection onto the radial
subspace of $x^\alpha F(x)F(y)=x^\alpha G(z)$. 
Clearly $\proj(x^\alpha G(x,y))$ is a scalar multiple of $|z|^{|\alpha|}G(z)$.
Moreover,
$\proj(x^\alpha F\otimes F)=0$ if at least one component $\alpha_j$ is odd,
because the integral over $S^{2d-1}$ of any function that is odd with respect to one
or more coordinate variables must vanish.

If $\alpha\in\{0,2,4,\dots\}^d$ then $x^\alpha$ is a nonnegative function which does not
vanish identically, so for any $r\in\reals^+$, $\int x^\alpha G(x,y)\,d\sigma_r(x,y)
= G(x,y) \int x^\alpha \,d\sigma_r(x,y)$ is strictly positive.
Therefore
$\proj(x^\alpha G) = c_{d,\alpha}|z|^{|\alpha|}G(z)$.

Continuing to assume that $\alpha\in\{0,2,4,\dots\}^d$, set $m = \tfrac12 |\alpha|$.  Then
\begin{align*}
\int_{\reals^d} |z|^{|\alpha|} G(z)F(y)\,dy
&= 
\int_{\reals^d} F(y) |(x,y)|^{2m} G(x,y)\,dy
\\ &= \int_{\reals^d} e^{-\pi|y|^2/2} (|x|^2+|y|^2)^{m}e^{-\pi(|x|^2+|y|^2)/2} \,dy
\\&= q_m(|x|^2) e^{-\pi |x|^2/2} 
\\&= q_m(|x|^2) F(x)
\end{align*}
where $q_m:\reals\to\reals$ is a polynomial of degree exactly $m$.
\end{proof}

\begin{proof}[Proof of Proposition~\ref{prop:ONbasis}]
We have shown that
\begin{equation} e^{\pi|x|^2/2} T(|x|^{2m}e^{-\pi|x|^2/2})=\lambda_{d,m}|x|^{2m}
\ \text{plus a polynomial in $|x|^2$ of lower degree,} \end{equation}
where $\lambda_{d,m}\ne 0$ for each nonnegative integer $m$. 
The Gram-Schmidt procedure therefore constructs
an orthonormal basis for $\scriptr$ consisting of eigenfunctions of $T$ of the indicated form.

The corresponding eigenvalue $\lambda_{d,m}$ equals the coefficient of the highest power of $|x|^{2m}$
in the polynomial $e^{\pi|x|^2/2}T(|x|^{2m} e^{-\pi|x|^2/2})$.  
To compute this coefficient write
\begin{equation} \proj((|x|^{2m}F)\otimes F)(z) = \gamma_{m,d} |z|^{2m}e^{-\pi|z|^2/2} \end{equation}
where
\begin{equation} \gamma_{m,d} = \int_{S^{2d-1}} |x|^{2m}\,d\sigma(x,y) \end{equation}
where $S^{2d-1}\subset\reals^{d+d}$ is the unit sphere and $\sigma$ is surface measure on $S^{2d-1}$,
normalized so that $\sigma(S^{2d-1})=1$.  Consequently
\begin{align*}
T(|x|^{2m}F) &= \int_{\reals^d} e^{-\pi|y|^2/2} \proj((|x|^{2m}F)\otimes F)(x,y)\,dy
\\& = \int_{\reals^d} e^{-\pi|y|^2/2} \gamma_{m,d} (|x|^2+|y|^2)^m e^{-\pi(|x|^2+|y|^2)/2}\,dy
\\& = \big(\gamma_{m,d}\int_{\reals^d} e^{-\pi|y|^2}\,dy |x|^{2m}+O(|x|^{2m-2})\big)e^{-\pi|x|^2/2} 
\\& = \big(\gamma_{m,d}|x|^{2m}+O(|x|^{2m-2})\big)e^{-\pi|x|^2/2} 
\end{align*}
where $O(|x|^{2m-2})$ denotes a polynomial in $|x|^2$ of degree at most $2m-2$ as a polynomial
in $x$.
Thus $\lambda_{m,d}=\gamma_{m,d}$.

Define $\omega_{n}$ by the relation
$\int_{\reals^{n}} g(|z|)\,dz = \omega_n\int_0^\infty g(r) r^{n-1}\,dr$.
One can compute $\gamma_{m,d}$ by writing
\begin{align*}
\int_{\reals^{d+d}} e^{-\pi(|x|^2+|y|^2)/2} |x|^{2m}\,dx\,dy
&= \omega_{2d} \gamma_{m,d} \int_0^\infty r^{2d+2m} e^{-\pi r^2/2}\,r^{-1}\,dr
\\&= \tfrac12 \omega_{2d} \gamma_{m,d}(\pi/2)^{-d-m} \int_0^\infty s^{d+m} e^{-s}\,s^{-1}\,ds
\\&= \tfrac12 \omega_{2d} \gamma_{m,d}(\pi/2)^{-d-m} \Gamma(m+d).
\end{align*}
The left-hand side can be alternatively be evaluated as
\begin{align*}
\int_{\reals^{d+d}} e^{-\pi(|x|^2+|y|^2)/2} |x|^{2m}\,dx\,dy
&= \int_{\reals^{d}} e^{-\pi|x|^2/2} |x|^{2m}\,dx\cdot \int_{\reals^{d}} e^{-\pi|y|^2/2} \,dy
\\& = \tfrac12 \omega_d (\pi/2)^{-m-\tfrac12 d} \Gamma(m+\tfrac12 d) \cdot 
\tfrac12 \omega_d (\pi/2)^{-d/2} \Gamma(\tfrac12 d). 
\end{align*}

Since $\gamma_{0,d}=1$, the same calculation with $m=0$ gives
$1 = \tfrac12 \omega_d (\pi/2)^{-d/2}\Gamma(d/2)$.
Therefore
\begin{align*}
\gamma_{m,d} 
= \frac{ 2^{-2}\omega_d^2 \Gamma(m+\tfrac12 d) \Gamma(\tfrac12 d) } {  2^{-1} \omega_{2d} \Gamma(m+d) }
= \frac{  \Gamma(m+\tfrac12 d) \Gamma(\tfrac12 d) } {  \Gamma(m+d) } \cdot \frac{\Gamma(d)}{\Gamma(\tfrac12 d)^2}
= \frac{  \Gamma(m+\tfrac12 d) } { \Gamma(m+d) } \cdot \frac{\Gamma(d)}{\Gamma(\tfrac12 d)}.
\end{align*}
\end{proof}

\begin{lemma}\label{lemma:Tinfo}
If $f,g\in\lt(\reals^d)$ satisfy
$\langle f,\psi_0\rangle = \langle g,\psi_0\rangle=0$
and $\langle f+g,\psi_1\rangle=0$ then
\begin{equation} \norm{Tf}^2 + 2 \Re\langle Tf,g\rangle + \norm{Tg}^2 
\le \frac{d+2}{2(d+1)} \norm{(f,g)}^2.  \end{equation}
\end{lemma}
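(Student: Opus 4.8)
The plan is to diagonalize $T$ and reduce the asserted inequality to a term-by-term estimate over the eigenmodes of $T$. By Proposition~\ref{prop:ONbasis} together with the fact that $T$ vanishes on $\scriptr^\perp$, write $f = \sum_{m\ge 0} a_m\psi_m + f^\perp$ and $g = \sum_{m\ge 0} b_m\psi_m + g^\perp$ with $f^\perp,g^\perp\in\scriptr^\perp$, so that $Tf = \sum_m \lambda_{d,m}\,a_m\psi_m$ and $Tg = \sum_m \lambda_{d,m}\,b_m\psi_m$. Expanding inner products in this orthonormal system,
\[
\norm{Tf}^2 + 2\Re\langle Tf,g\rangle + \norm{Tg}^2
= \sum_{m\ge 0}\lambda_{d,m}\Big(\lambda_{d,m}|a_m|^2 + 2\Re(a_m\overline{b_m}) + \lambda_{d,m}|b_m|^2\Big),
\]
while $\norm{(f,g)}^2 = \norm{f}^2 + \norm{g}^2 \ge \sum_{m\ge 0}(|a_m|^2+|b_m|^2)$ by Parseval. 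The two hypotheses translate into $a_0 = b_0 = 0$ and $a_1 + b_1 = 0$.

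Next I would record the eigenvalues that matter: from \eqref{eigenvalues}, $\lambda_{d,0}=1$, $\lambda_{d,1}=\tfrac12$, $\lambda_{d,2}=\tfrac{d+2}{4(d+1)}$, and $\lambda_{d,m+1}/\lambda_{d,m} = (m+\tfrac{d}{2})/(m+d) < 1$, so the sequence $(\lambda_{d,m})_{m\ge 0}$ is strictly decreasing; in particular $0<\lambda_{d,m}\le\mu:=\tfrac{d+2}{4(d+1)}$ for every $m\ge 2$, and $\mu\le 1$. Now estimate the series mode by mode. The $m=0$ term is $0$. For $m=1$, substituting $b_1=-a_1$ and $\lambda_{d,1}=\tfrac12$ gives the contribution $\lambda_{d,1}\big(2\lambda_{d,1}-2\big)|a_1|^2 = -\tfrac12|a_1|^2 \le 0$. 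For $m\ge 2$, the elementary bound $2\Re(a_m\overline{b_m})\le |a_m|^2+|b_m|^2$ shows the $m$-th term is at most $\lambda_{d,m}(\lambda_{d,m}+1)(|a_m|^2+|b_m|^2)\le \mu(\mu+1)(|a_m|^2+|b_m|^2) \le 2\mu(|a_m|^2+|b_m|^2)$, using $\mu\le 1$ and the monotonicity of $t\mapsto t(t+1)$ on $(0,\infty)$. Summing over $m$,
\[
\norm{Tf}^2 + 2\Re\langle Tf,g\rangle + \norm{Tg}^2 \le 2\mu\sum_{m\ge 1}(|a_m|^2+|b_m|^2) \le 2\mu\,\norm{(f,g)}^2 = \tfrac{d+2}{2(d+1)}\,\norm{(f,g)}^2,
\]
which is the claim.

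The substance is all contained in Proposition~\ref{prop:ONbasis}, so there is no genuine obstacle; the main point of care is the role of the two linear constraints. Dropping $\langle f,\psi_0\rangle=\langle g,\psi_0\rangle=0$ reinstates the top eigenvalue $\lambda_{d,0}=1$ and destroys the bound, while dropping $\langle f+g,\psi_1\rangle=0$ leaves the $m=1$ term as large as $\lambda_{d,1}(\lambda_{d,1}+1)(|a_1|^2+|b_1|^2)=\tfrac34(|a_1|^2+|b_1|^2)$, which exceeds $\tfrac{d+2}{2(d+1)}(|a_1|^2+|b_1|^2)$ as soon as $d\ge 2$. The hypothesis $a_1+b_1=0$ is exactly what converts that borderline mode into a strictly negative, hence harmless, contribution, and this is the only place it is used.
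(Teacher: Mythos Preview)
Your proof is correct and follows essentially the same route as the paper's: expand $f,g$ in the orthonormal eigenbasis of Proposition~\ref{prop:ONbasis}, use the hypotheses to kill the $m=0$ mode and render the $m=1$ contribution nonpositive, then bound the remaining modes by the largest surviving eigenvalue $\lambda_{d,2}=\tfrac{d+2}{4(d+1)}$.

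One small point of comparison: you interpret $\norm{Tf}^2$ literally as $\sum_m \lambda_{d,m}^2|a_m|^2$, whereas the paper's proof writes the left-hand side as $\sum_m \lambda_{d,m}|a_m+b_m|^2$, in effect identifying $\norm{Tf}^2$ with $\langle Tf,f\rangle$ (which is the quantity actually needed in \S\ref{section:perturbation}). Since $0\le\lambda_{d,m}\le 1$, the version you prove is the weaker of the two, and your mode-by-mode bound $\lambda_{d,m}(\lambda_{d,m}+1)\le 2\lambda_{d,2}$ for $m\ge 2$ handles it cleanly; the paper instead uses $\lambda_{d,m}|a_m+b_m|^2\le 2\lambda_{d,2}(|a_m|^2+|b_m|^2)$. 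Either way the argument is the same in substance.
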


\begin{proof}
For $m\in\{0,1,2,\dots\}$ let $\psi_m$ be $\lt$--normalized eigenfunctions
of $T$ 
with corresponding eigenvalues $\lambda_{m,d}$ discussed above,
and $\psi_0=F$.

For fixed dimension $d$, the eigenvalue $\gamma_{m,d}$ is a decreasing function of $m$.  Indeed,
\[ \frac{\gamma_{m+1,d}}{\gamma_{m,d}} = \frac{m+\tfrac12 d}{m+d} \]
according to \eqref{eigenvalues} and the functional equation of the Gamma function.
The leading eigenvalues are
\begin{align*}
\lambda_{0,d}=1,
\qquad \lambda_{1,d} =\frac{d/2}{d}\lambda_{0,d}=\tfrac12,
\qquad \lambda_{2,d} =\frac{1+\tfrac12 d}{1+d}\lambda_{1,d}=\frac{d+2}{4(d+1)}.
\end{align*}

Decompose 
\begin{align*}
f = \sum_{m=0}^\infty \widehat{f}(m)\psi_m+ \tilde f
\qquad\text{and}\qquad
g = \sum_{m=0}^\infty \widehat{g}(m)\psi_m+ \tilde g
\end{align*}
where $\tilde f,\tilde g\perp\scriptr$.
It is given that $\widehat{f}(0)=\widehat{g}(0)=0$
and that $\widehat{g}(1)=-\widehat{f}(1)$.
Then 
\[ \langle Tg,f\rangle  = \langle T(g-\tilde g),(f-\tilde f)
= \sum_{m=0}^\infty \lambda_{m,d}\widehat{g}(m)\overline{\widehat{f}(m)}\]
and
\[ \norm{f}^2 = \norm{\tilde f}^2 + \sum_m |\widehat{f}(m)|^2
\qquad\text{and}\qquad
\norm{g}^2 = \norm{\tilde g}^2 + \sum_m |\widehat{g}(m)|^2.\]

Since $\widehat{f}(1)=\widehat{g}(1)=0$ and
$\widehat{f}(1)+\widehat{g}(1)=0$,
\begin{align*}
\norm{Tf}^2 + 2 \Re\langle Tf,g\rangle + \norm{Tg}^2 
&= \sum_{m=0}^\infty
\Big(
\lambda_{m,d}\big(
|\widehat{f}(m)|^2
+|\widehat{g}(m)|^2
+2\Re(\widehat{f}(m)\overline{\widehat{g}(m)}
\big)
\Big)
\\& =
\tfrac12 |\widehat{f}(1)+\widehat{g}(1)|^2
+ \sum_{m=2}^\infty \lambda_{m,d} |\widehat{f}(m)+\widehat{g}(m)|^2
\\&\le 
\frac{d+2}{4(d+1)}\sum_{m=2}^\infty  |\widehat{f}(m)+\widehat{g}(m)|^2
\\&\le 
\frac{d+2}{2(d+1)}\sum_{m=2}^\infty  (|\widehat{f}(m)|^2+|\widehat{g}(m)|^2)
\\& \le \frac{d+2}{2(d+1)} \,\norm{(f,g)}^2.
\end{align*}
\end{proof}

\section{Perturbation analysis} \label{section:perturbation}

For nonzero $f,g\in L^2(\reals^d)$ define
\begin{equation} \Phi(f,g) = \frac{\norm{\proj(f\otimes g)}^2}{\norm{f}^2\norm{g}^2}.  \end{equation}
Continue to let $F(x) = e^{-\pi|x|^2/2}$. 

Suppose that the ratio of the distance of $(u,v)$ to $\gcross$
to the norm of $(u,v)$ is small,
and that the closest element of $\gcross$ to $(u,v)$ is $(F,F)$.
Then the first variation at $(r,s,t)=0$
of $\norm{u-e^{r|x|^2+s}F}^2+\norm{v-e^{t}F}^2$
with respect to $(r,s,t)$ must vanish. Therefore
$(u,v)$ can be expressed in the form $(u,v)=(F+f,F+g)$ where $(f,g)$ 
is unique and satisfies
\begin{equation*} 
 \langle f,F\rangle = \langle g,F\rangle  
= \langle f+g,\,|x|^2 F\rangle=0.
\end{equation*}
Equivalently,
\begin{equation} \label{eq:firstvar0}
 \langle f,\psi_0\rangle = \langle g,\psi_0\rangle  = \langle f+g,\,\psi_1\rangle=0.
\end{equation}

One has
\begin{align*}
\norm{\proj(u\otimes v)}^2
&= \norm{F}^4
+
2\Re
\langle \proj(f\otimes F),F\otimes F\rangle
+
2\Re\langle \proj(F\otimes g),F\otimes F\rangle
\\&
+2\Re
\langle \proj(f\otimes g),F\otimes F\rangle
+2\Re
\langle \proj(f\otimes F),F\otimes g\rangle
\\& + \langle \proj(f\otimes F),\proj(f\otimes F)\rangle
+ \langle \proj(F\otimes g),\proj(F\otimes g)\rangle
\ + O(\norm{(f,g)}^3)
\end{align*}
as $\norm{(f,g)}\to 0$.
Observe that
\[\langle \proj(f\otimes F),F\otimes F\rangle
= \langle f\otimes F,\proj(F\otimes F)\rangle
= \langle f\otimes F,F\otimes F\rangle
= \langle f,F\rangle\cdot\langle F,F\rangle
=0\]
and likewise $\langle \proj(g\otimes F),F\otimes F\rangle=0$.
Invoking the identity
$\langle \proj(f\otimes F),F\otimes g\rangle = \langle Tf,g\rangle$.
and using the relations $\langle f,F\rangle=\langle g,F\rangle=0$
we obtain
\begin{align*}
\norm{\proj(u\otimes v)}^2
=1 
+ 2\Re  \langle Tf,  g\rangle 
+ \langle Tf,f\rangle+ \langle T(g),g\rangle
+ O(\norm{(f,g)}^3).
\end{align*}

On the other hand,
\begin{equation*}
\norm{u}^2\norm{v}^2
= 1 + \norm{f}^2+\norm{g}^2 + O(\norm{(f,g)}^4)
\end{equation*}
since $f,g\perp F$.
Therefore
\begin{equation*}
\frac{\norm{\proj(u\otimes v)}^2} { \norm{u}^2  \norm{v}^2  }
= 1 + 
2\Re \langle Tf,g\rangle + \langle Tf,f\rangle+ \langle T(g),g\rangle
-\norm{(f,g)}^2
+ O(\norm{(f,g)}^3).
\end{equation*}

The inequality under investigation here has a useful group of symmetries.
If $\delta_r(f)(x)=f(rx)$,
then $\proj(\delta_r(f)\otimes \delta_r(g)) = \delta_r(\proj(f\otimes g))$
where $\delta_r$ acts on functions with domain $\reals^d$ on the left-hand
side of the equation, and on functions with domain $\reals^d\times\reals^d$
on the right.
Likewise if $e_t(f)(x)=e^{it|x|^2}f(x)$ for $t\in\reals$
then $\proj(e_t(f)\otimes e_t(g)) = e_t(\proj(f\otimes g))$.
Of course $\proj(c' f\otimes c''g) = c'c''(\proj(f\otimes g))$
for scalars $c',c''\in\complex$.

\begin{proof}[Proof of Theorem~\ref{thm:main}]
Let $(u,v)\in\lt(\reals^d)\times\lt(\reals^d)$. Suppose that
the closest element of the closed subspace $\gcross$ of $\lt\times\lt$
to $(u,v)$ is $(F,F)$, and that the distance from $(u,v)$ to $(F,F)$ is much less than
$\norm{(u,v)}$.
The orthogonality relations \eqref{eq:firstvar0} are consequently satisfied by $(f,g)=(u-F,v-F)$.
Therefore 
\begin{align*}
\frac{\norm{\proj(u\otimes v)}^2} { \norm{u}^2  \norm{v}^2  }
&= 1 + 2\Re \langle Tf,g\rangle + \langle Tf,f\rangle+ \langle T(g),g\rangle
-\norm{(f,g)}^2 + O(\norm{(f,g)}^3)
\\& 
\le 1+ \frac{d+2}{2(d+1)} \norm{(f,g)}^2
-\norm{(f,g)}^2 + O(\norm{(f,g)}^3)
\\& 
\le 1 -\frac{d}{2(d+1)} \norm{(f,g)}^2
+ O(\norm{(f,g)}^3)
\\& 
= 1 -\frac{d}{2(d+1)} 
\dist((u,v),\gcross)^2
+O(\dist((u,v),\gcross)^3).
\end{align*}

Consider next a general ordered pair $(u,v)\in\lt(\reals^d)\times\lt(\reals^d)$
satisfying $\norm{u}=\norm{v}=1$, with the distance from
$(u,v)$ to $\gcross$ sufficiently small. 
The closest point in $\gcross$ to $(u,v)$ may be expressed as 
$(aT(F),bT(F))$ where $T$ is a norm-preserving element of 
the group of transformations of $\lt(\reals^d)$
generated by the $e_t$ and $r^{d/2}\delta_r$, and $a,b\in\complex$.
Therefore the closest element of $\gcross$ to
$(\tilde u,\tilde v) = (a^{-1}T^{-1}(u), b^{-1}T^{-1}(v))$
is $(F,F)$.
We have shown that
\begin{align*}
\frac{\norm{\proj(u\otimes v)}^2} { \norm{u}^2  \norm{v}^2  }
& = \frac{\norm{\proj(\tilde u\otimes \tilde v)}^2} { \norm{\tilde u}^2  \norm{\tilde v}^2  }
\\&
\le 1 -\frac{d}{2(d+1)} \dist((\tilde u,\tilde v),\gcross)^2 +O(\dist((\tilde u,\tilde v),\gcross)^3)
\\&
= 1 -\frac{d}{2(d+1)} \dist((a^{-1} u,b^{-1} v),\gcross)^2 +O(\dist((a^{-1} u,b^{-1} v),\gcross)^3).
\end{align*}
 
Now
\begin{equation*}
1 = \norm{u}^2
= |a|^2\norm{F}^2 + \norm{u-aT(F)}^2
= |a|^2 + \norm{u-aT(F)}^2,
\end{equation*}
so $|a|^2 = 1-\norm{u-aT(F)}^2$
and consequently $|a^{-1}| = 1+ O(\dist(u,v),\gcross)^2$.
Likewise $|b^{-1}| = 1+ O(\dist(u,v),\gcross)^2$.
Therefore
\[\dist((a^{-1}u,b^{-1}v),\gcross) = \dist((u,v),\gcross) + O(\dist((u,v),\gcross)^2.\]
Inserting these estimates into the above result gives
\begin{equation} \frac{\norm{\proj(u\otimes v)}^2} { \norm{u}^2  \norm{v}^2  }
= 1 -\frac{d}{2(d+1)} \dist((u,v),\gcross)^2
+O(\dist(u, v),\gcross)^3)  \end{equation}
as was to be shown.
\end{proof}

\end{document}